\documentclass{amsart}
\usepackage{psfrag}
\usepackage{epsfig}
\usepackage{graphicx}
\usepackage{amsmath}
\usepackage{amssymb}
\usepackage{amsthm}
\usepackage{subfigure}
\usepackage{cite}
\usepackage[latin1]{inputenc}
\usepackage{euscript}

\newtheorem{example}{Example}
\newtheorem{proposition}{Proposition}
\newtheorem{lemma}{Lemma}
\newtheorem{theorem}{Theorem}
\newtheorem{remark}{Remark}
\newtheorem{corollary}{Corollary}

\textwidth 132mm \textheight 19cm \evensidemargin 0cm
\oddsidemargin 0cm


\pagenumbering{arabic}

\def\Aff{{\mathbb A}}
\def\PP{{\mathbb P}}
\def\Plm{{\mathbb P}^{\ell m-1}}
\def\F{{\mathbb F}}
\def\Fq{{\mathbb F}_q}

\def\Mat{{\mathbb M}}
\def\Matlm{{\Mat}_{\ell\times m}}
\def\Det{{\mathcal D}}
\def\Dt{{\Det}_t}
\def\Ej{{\mathcal E}_j}
\def\Ez{{\mathcal E}_0}
\def\Dtp{\widehat{\Det}_t}
\def\hC{\widehat{C}}
\def\It{{\mathcal{I}}_{t+1}}
\def\Ct{C_{\det}(t;\ell,m)}
\def\Cpt{\widehat{C}_{\det}(t;\ell,m)}
\def\Cone{C_{\det}(1;\ell,m)}
\def\Cpone{\widehat{C}_{\det}(1;\ell,m)}
\newcommand{\wH}{{\mathrm{w_H}}}
\newcommand{\rk}{\operatorname{rank}}

\newcommand{\EuD}{{\EuScript D}}
\newcommand{\EuE}{{\EuScript E}}
\newcommand{\EuF}{{\EuScript F}}
\newcommand{\EuX}{{\EuScript X}}

\newcommand{\GL}{\operatorname{GL}}
\newcommand{\Ev}{\operatorname{Ev}}


\begin{document}
\vspace*{5mm}

\noindent
\textbf{\LARGE Linear Codes associated to Determinantal Varieties}
\footnote{P. Beelen gratefully acknowledges the support from the Danish National Research Foundation and the National Science Foundation of China (Grant No.11061130539) for the Danish-Chinese Center for Applications of Algebraic Geometry in Coding Theory and Cryptography. \\ \indent
S. R. Ghorpade gratefully acknowledges the support from the Indo-Russian project INT/RFBR/P-114 from the Department of Science \& Technology, Govt. of India and the  IRCC Award grant 12IRAWD009 from IIT Bombay.}

\date{}

\vspace*{10mm}
\noindent
\textsc{Peter Beelen} \hfill \texttt{pabe@dtu.dk} \\
{\small Department of Applied Mathematics and Computer Science,  \newline 
Technical University of Denmark, DK 2800, Kgs. Lyngby, Denmark} \\ \\
\textsc{Sudhir R. Ghorpade} \hfill \texttt{srg@math.iitb.ac.in} \\
{\small Department of Mathematics,
Indian Institute of Technology Bombay, \newline
Powai, Mumbai 400076, India} \\ \\
\textsc{Sartaj Ul Hasan} \hfill \texttt{sartajulhasan@gmail.com} \\
{\small Scientific Analysis Group, Defence Research and Development Organisation,  \newline
Metcalfe House, Delhi 110054, India}

\bigskip

\vspace*{5mm}

\begin{center}
\parbox{11,8cm}{\footnotesize
\textbf{Abstract.}
We consider a class of linear codes associated to projective algebraic varieties defined by the vanishing of minors of a fixed size of a generic matrix. 
It is seen that the resulting code has only a small number of distinct weights.
The case of varieties defined by the vanishing of $2 \times 2$ minors is considered in some detail. 
 Here we obtain the complete weight distribution.    
Moreover, several generalized Hamming weights are determined explicitly and it is shown that the first few of them coincide with the distinct nonzero weights. 
One of the tools used is to determine the maximum possible number of matrices of rank~$1$ in a linear space of matrices of a  given dimension over a finite field. In particular, we determine the structure and the maximum possible dimension of linear spaces of matrices in which every nonzero matrix has rank~$1$.
}
\end{center}


\section{Introduction}
A useful and interesting way to construct a linear code is to consider a projective algebraic variety $V$ defined over the finite field $\Fq$ with $q$ elements together with a nondegenerate  embedding in a projective space, and to look at the projective system (in the sense of Tsfasman and Vl\u{a}du\c{t} \cite{TV1}) associated to the $\Fq$-rational points of $V$. A good illustration is provided by the case of Grassmann codes and Schubert codes, which have been of much interest; see, for example, \cite{N}, \cite{GT}, \cite{HJR2}, \cite{X} or the survey \cite{Little}.
In this paper we consider a class of linear codes that are associated to classical determinantal varieties. These will be referred to as determinantal codes. The length and dimension of these codes are easy to determine and also one can readily show that they are nondegenerate. We shall then focus on the question of determining the minimum distance and more generally, the complete weight distribution, and also the generalized Hamming weights of determinantal codes. From a geometric viewpoint, this corresponds to determining the 
number of $\Fq$-rational points in all possible hyperplane sections and also in maximal linear sections of determinantal varieties. We give a general description of all the weights of determinantal codes and then analyze in greater details the codes associated to the variety defined by the vanishing of all $2\times 2$ minors of a generic $\ell \times m$ matrix. It is seen in this case that the codes exhibit a curious phenomenon that there are exactly $\ell$ nonzero weights and these coincide with the first $\ell$ generalized Hamming weights which happen to meet the Griesmer-Wei bound. This phenomenon is exhibited by $[n,k]_q$-MDS codes (for instance, the Reed-Solomon codes),  which have exactly $k$ nonzero weights and $k$ generalized Hamming weights given by 
$n-k+1, n-k+2, \dots , n$. 
Another trivial example is that of the simplex code (i.e., the dual of Hamming code) which has only one nonzero weight and it evidently coincides the first generalized Hamming weight. However, we do not know any other nontrivial examples and determinantal codes appear to be intersting in this regard. Unlike simplex codes, determining \emph{all} generalized Hamming weights of determinantal codes seems difficult. but we make some partial progress here. 

It turns out (although we were not initially aware of it) that codes analogous to determinantal codes were considered in a different context by  Camion \cite{C} and Delsarte \cite{D} who consider codes derived from bilinear forms. In effect, Delsarte obtains the weight distribution of these codes using an explicit determination of the characters of the Schur ring of an association scheme corresponding to these bilinear forms (see end of Section \ref{sec:wts} below for more details). Our approach, however, is entirely different and may be of some interest. Further, results concerning generalized Hamming weights appear to be new. 
The auxiliary results used in finding the generalized Hamming weights 
were alluded to in the abstract, and these (namely, Corollary \ref{cor:maxrank1} and Lemma \ref{maxrank1s})  
may also be of some independent interest.

This work has been presented at the Fourteenth International Workshop on Algebraic and Combinatorial 
Coding Theory (ACCT-XIV) held at Kaliningrad, Russia during September 2014, and an extended 
abstract containing 
statements of results appears in the informal proceedings of ACCT-XIV.

\section{Preliminaries}
\label{sec:prelim}

Fix throughout this paper a prime power $q$, positive integers $t, \ell,m$, and a $\ell\times m$ matrix $X = (X_{ij})$ whose entries are independent indeterminates over $\Fq$. We will denote by $\Fq[X]$ the polynomial ring in the $\ell m$ variables $X_{ij}$ ($1\le i \le \ell$, $1\le j \le m$) with coefficients in $\Fq$. As usual, by a \emph{minor} of size $t$ or a $t\times t$ minor of $X$ we mean the determinant of a $t\times t$ submatrix of $X$, where $t$ is a nonnegative integer $\le \min\{\ell, m\}$. As per standard conventions, the only  $0\times 0$ minor of $X$ is $1$. We will be mostly interested in the class of minors of a fixed size, and this class is unchanged if $X$ is replaced by its transpose. With this in view, we shall always assume, without loss of generality, that $\ell\le m$. Given a field $\F$, we denote by $\Matlm (\F)$ the set of all $\ell\times m$ matrices with entries in $\F$. Often $\F=\Fq$ and in this case we may simply write
$\Matlm$ for $\Matlm (\Fq)$. Note that $\Matlm $ can be viewed as an affine space $\Aff^{\ell m}$ over $\Fq$ of dimension $\ell m$. For $0\le t \le \ell$, the corresponding classical determinantal variety (over $\Fq$) is denoted by $\Dt$ and defined as  the affine algebraic variety in $\Aff^{\ell m}$ given by the vanishing of all $(t+1) \times (t+1)$ minors of $X$; in other words
$$
\Dt = \left\{ M\in \Matlm (\Fq) : \rk (M) \le t \right\}.
$$
The affine variety $\Dt$ is, in fact, a cone; in other words, the vanishing ideal $\It$ (which is precisely the ideal of $\Fq[X]$ generated by all $(t+1) \times (t+1)$ minors of $X$) is a homogeneous ideal.
Also it is a classical (and nontrivial) fact that $\It$ is a prime ideal (see, e.g., \cite{survey}).
Thus $\Dt$ can also be
viewed as a projective algebraic variety in $\Plm$, and viewed this way, we will denote it by $\Dtp$. We remark that the dimension of $\Dtp$ is $t(\ell + m -t) -1$ (cf. \cite{survey}). Briefly put, the determinantal code $\Cpt$ is the linear code corresponding to the projective system
$\Dtp \hookrightarrow \Plm (\Fq) = \PP(\Matlm)$. An essentially equivalent way to obtain this code is to consider the  image $\Ct$ of the evaluation map
\begin{equation}
\label{EvMap}
\Ev: \Fq[X]_1 \to \Fq^n \quad \text{defined by} \quad \Ev(f) = c_f:=  \left(f(M_1), \dots , f(M_n) \right),
\end{equation}
where $\Fq[X]_1$ denotes the space of homogeneous polynomials in $\Fq[X]$ of degree $1$ together with the zero polynomial, and
$M_1, \dots , M_n$ is an ordering of $\Dt$.

Recall that in general for a linear code $C$ of length $n$,
 i.e., for a linear subspace $C$ of $\Fq^n$,
the \emph{Hamming weight} of a codeword $c=(c_1, \dots , c_n)$, denoted $\wH(c)$, and the \emph{support weight} of 
any $D\subseteq C$, denoted $\Vert D\Vert$, are defined by
$$
\wH(c) : =  |\{i: c_i\ne 0\}| \quad \text{and} \quad
\Vert D \Vert  :=  |\{ i: \text{there exists } c\in D \text{ with } c_i \ne0\}|,
$$
where for a finite set $S$, by $|S|$ we denote the cardinality of $S$. The minimum distance of $C$, denoted $d(C)$, and more generally, the $r^{\rm th}$ \emph{higher weight} or the $r^{\rm th}$ \emph{generalized Hamming weight} of $C$, denoted $d_r(C)$, are defined by
\begin{eqnarray*}
d(C) &:= & \min\{\wH(c) : c\in C, \; c\ne 0\} \quad \text{and  for $r=1, \dots , k$,} \quad \\
d_r(C)&:=& \min\{\Vert D \Vert : D \text{ is a subcode of }C \text{ with } \dim D=r\}.
\end{eqnarray*}

The parameters of $\Ct$ determine those of $\Cpt$ and vice-versa. More precisely, we have the following.

\begin{proposition}
\label{AffProj}
Write $C = \Ct$ and $\hC = \Cpt$. Let $n, \,  k, \, d$, and $A_i$ (resp. $\hat{n}, \, \hat{k}, \, \hat{d}$, and $\hat{A}_i$) denote, respectively, the length, dimension, minimum distance and the number of codewords of weight $i$ of $C$ (resp. $\hC$). Then 
$$
n = 1 +\hat{n}(q-1), \quad k= \hat{k}, \quad d = \hat{d}(q-1), \quad \text{and} \quad A_{i(q-1)} = \hat{A}_i \  \text{for $0\le i \le \hat{n}$}.
$$
Moreover $A_n=0$ and more generally, $A_j = 0$ for $0\le j\le n$ such that
$(q-1) \nmid j$.
Furthermore,  if for $1\le r\le k$, we denote by $d_r$ and $A^{(r)}_i$ (resp: $\hat{d}_r$ and $\hat{A}^{(r)}_i$) the $r^{\rm th}$ \emph{higher weight} and the number of $r$-dimensional subcodes of support weight $i$ of $C$ (resp. $\hC$), 
then $d_r = (q-1)\hat{d}_r$ and $A^{(r)}_{i(q-1)} = \hat{A}_i^{(r)}$ for $0\le i \le \hat{n}$.
\end{proposition}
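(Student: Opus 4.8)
The plan is to exploit the standard "affine cone versus projective variety" dictionary. The key observation is that $\Dt$ is a cone in $\Aff^{\ell m}$ whose associated projective variety is $\Dtp$; concretely, $\Dt \setminus \{0\}$ is partitioned into $\hat n$ punctured lines through the origin, each of size $q-1$, and the set of these lines is exactly the point set of $\Dtp$ defining $\hC$. I would fix once and for all a set of representatives $N_1, \dots, N_{\hat n} \in \Dt \setminus \{0\}$, one for each point of $\Dtp$, so that $\Dt = \{0\} \cup \{\lambda N_j : 1 \le j \le \hat n, \ \lambda \in \Fq^\ast\}$, and order $M_1, \dots, M_n$ accordingly.

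First I would prove $n = 1 + \hat n(q-1)$ (immediate from the partition above) and $k = \hat k$. For the latter, note that the evaluation map $\Ev$ of \eqref{EvMap} has kernel equal to the space of linear forms vanishing on all of $\Dt$; since $\Dt$ is a nondegenerate affine cone (it spans $\Aff^{\ell m}$ because, e.g., all rank-one matrices lie in it for $t \ge 1$, and the $t=0$ case is trivial), this kernel is $0$, so $k = \ell m = \hat k$, the latter equality holding for the analogous reason on the projective side. Next, for a nonzero $f \in \Fq[X]_1$ and $\lambda \in \Fq^\ast$ we have $f(\lambda N_j) = \lambda f(N_j)$ by homogeneity, hence $f(\lambda N_j) \ne 0 \iff f(N_j) \ne 0$; since also $f(0) = 0$, the support of $c_f$ consists of $q-1$ coordinates for each $j$ with $f(N_j) \ne 0$. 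Writing $\hat c_f$ for the corresponding codeword of $\hC$, this gives $\wH(c_f) = (q-1)\,\wH(\hat c_f)$. Summing over the $f$ that realise a given weight yields $A_{i(q-1)} = \hat A_i$ for $0 \le i \le \hat n$, and the vanishing $A_j = 0$ whenever $(q-1) \nmid j$ (in particular $A_n = 0$, since $n \not\equiv 0 \pmod{q-1}$ as $n = 1 + \hat n(q-1)$); taking $i = \hat d$ gives $d = (q-1)\hat d$.

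For the generalized Hamming weights, I would argue that the correspondence $D \mapsto \hat D$ (replace each codeword $c_f$ of $D$ by $\hat c_f$, equivalently restrict evaluation to $N_1, \dots, N_{\hat n}$) is a dimension-preserving bijection between $r$-dimensional subcodes of $C$ and $r$-dimensional subcodes of $\hC$: it is linear and injective because its "inverse" is the full evaluation on $M_1, \dots, M_n$, which is injective on any space of linear forms (kernel $0$ as above). Since the support of $D$ is, by homogeneity, the union over its codewords of their supports, the same coordinate-blocking argument gives $\Vert D\Vert = (q-1)\Vert \hat D\Vert$. Taking minima over $r$-dimensional subcodes yields $d_r = (q-1)\hat d_r$, and counting those achieving support weight $i$ gives $A^{(r)}_{i(q-1)} = \hat A^{(r)}_i$.

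The only real subtlety — the \textbf{main obstacle}, though a mild one — is checking that $\Ev$ is injective, i.e. that $\Dt$ spans $\Aff^{\ell m}$ (equivalently that no nonzero linear form vanishes on $\Dt$), so that $k = \ell m$ and the subcode correspondence is genuinely a bijection rather than merely a surjection. This is where one uses that $\Dt$ contains all matrices of rank $\le t$ and, for $t \ge 1$, in particular all the elementary matrices $E_{ij}$, whose span is all of $\Matlm$; the case $t = 0$ forces $\Dtp = \emptyset$ and the statement is vacuous or degenerate and can be noted separately. Everything else is bookkeeping with the homogeneity identity $f(\lambda N) = \lambda f(N)$.
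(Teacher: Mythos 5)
Your argument is correct and follows essentially the same route as the paper: exploit the cone structure of $\Dt$, fix representatives of the points of $\Dtp$, use the homogeneity identity $f(\lambda M)=\lambda f(M)$ to see that each projective coordinate blows up into a block of $q-1$ affine coordinates (plus one zero coordinate at the origin), and transport weights, support weights, and the subcode correspondence through the resulting $(q-1)$-to-$1$ blocking. One minor slip: your justification of $A_n=0$ via ``$n\not\equiv 0 \pmod{q-1}$'' breaks down when $q=2$ (where $q-1$ divides everything), but the conclusion is still immediate from the identity $\wH(c_f)=(q-1)\wH(\hat c_f)\le (q-1)\hat n=n-1$ that you had already established.
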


\begin{proof}
Clearly, $\rk(\lambda M) = \rk(M)$ and $f(\lambda M) =\lambda f(M)$ 
for any \hbox{$M\in \Matlm$}, $f\in \Fq[X]_1$  and $\lambda\in \Fq$ with $\lambda \ne 0$. 
Let  $\hat{M}_1, \dots , \hat{M}_{\hat{n}}$ be fixed representatives in $\Matlm$ of the points of $\Plm (\Fq)$ corresponding to  $\Dtp$. Then $M_1, \dots , M_n$ consist precisely of the zero matrix and the nonzero scalar multiples of $\hat{M}_1, \dots , \hat{M}_{\hat{n}}$. Moreover, $\Fq[X]_1$ can be viewed as the dual of the $\Fq$-vector space $\Matlm$ and $\Cpt$ can be identified with the image of the evaluation map from
$\Fq[X]_1$ into $\Fq^{\hat{n}}$ defined by $f\mapsto  \hat{c}_f:= \big(f(\hat{M}_1), \dots , f(\hat{M}_{\hat{n}})\big)$. Thus the codewords of $C$ 
are obtained from the codewords of $\hC$ 
by replacing each coordinate of the latter by all its nonzero scalar multiples, and inserting $0$ as an additional coordinate corresponding to the zero matrix. The resulting map $c_f \mapsto \hat{c}_f$ is a linear projection $\pi$ of $C$ onto $\hC$ 
with the property that $\wH( c) = (q-1)\wH (\pi(c))$ for all $c\in C$ 
and
$ \Vert \pi^{-1} (\hat{D}) \Vert = (q-1) \Vert \hat{D} \Vert $ for all 
$\hat{D}\subseteq \hC$. 
Moreover,  $\{\pi^{-1}(\hat{D}) : \hat{D} \text{ a  $r$-dimensional subcode of }\hC\}$ is precisely the family 
of all $r$-dimensional subcodes of $C$.  This yields the desired result.
\end{proof}

As indicated by the above proof, the code $\Ct$ is degenerate. 
However, $\Cpt$ is nondegenerate. 
The length and dimension of these two codes are easily obtained. The former goes back at least to Landsberg \cite{Land} who obtained a formula for $n$, or rather the number of matrices in $\Matlm$ of a given rank $t$ in case $q$ is prime. We outline a proof for the sake of completeness.

\begin{proposition}
\label{DimLength}
$\Cpt$ is nondegenerate of dimension $\hat{k} =\ell m$ and length 
$$
\hat{n} = \sum_{j=1}^t \hat{\mu}_j(\ell,m) \quad \text{where} \quad
\hat{\mu}_j(\ell,m)  
 := 
 \frac{q^{\binom{j}{2}}}{q-1} \prod_{i=0}^{j-1} \frac{ \left( q^{\ell-i}-1\right) \left( q^{m-i}-1\right) }{q^{i+1}-1}.
$$
\end{proposition}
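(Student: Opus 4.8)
The plan is to dispose of the dimension and nondegeneracy by an elementary argument, and then to read off the length from the classical count of matrices of a prescribed rank, followed by a short manipulation of $q$-factorials.

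For the dimension, note that since $t\ge 1$ the cone $\Dt$ contains all $\ell\times m$ matrices of rank $1$; in particular it contains the $\ell m$ matrices $E_{ij}$ having a single nonzero entry $1$ in position $(i,j)$. For $f=\sum_{i,j}a_{ij}X_{ij}\in\Fq[X]_1$ one has $f(E_{ij})=a_{ij}$, so any linear form vanishing on all of $\Dt$ must be the zero form. Hence the map $\Ev$ of \eqref{EvMap} is injective, so $\Ct$ has dimension $\ell m$, and by Proposition~\ref{AffProj} we get $\hat{k}=\dim_{\Fq}\Fq[X]_1=\ell m$. Nondegeneracy of $\Cpt$ is then immediate: each of its coordinates is indexed by a nonzero matrix $\hat{M}_i$, and for any nonzero matrix there is a linear form not vanishing at it, so no coordinate of $\Cpt$ is identically $0$.

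For the length, recall that $\Dtp$ is the projectivization of the cone $\Dt$, so $\hat{n}=(|\Dt|-1)/(q-1)$ and $|\Dt|=\sum_{j=0}^{t}\nu_j$, where $\nu_j$ denotes the number of matrices in $\Matlm$ of rank exactly $j$. It therefore suffices to prove $\nu_j=(q-1)\,\hat{\mu}_j(\ell,m)$ for $1\le j\le t$. To count $\nu_j$, I would use that every rank-$j$ matrix $M$ admits a factorization $M=AB$ with $A$ of size $\ell\times j$ and $B$ of size $j\times m$, that such $A$ and $B$ are then automatically of rank $j$, and that for a fixed $M$ these factorizations form a single orbit of the group $\GL_j(\Fq)$ acting freely via $g\cdot(A,B)=(Ag^{-1},gB)$; hence there are exactly $|\GL_j(\Fq)|$ of them. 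Since the number of $\ell\times j$ matrices of rank $j$ (resp.\ $j\times m$ matrices of rank $j$) equals the number of ordered $j$-tuples of linearly independent vectors in $\Fq^{\ell}$ (resp.\ in $\Fq^{m}$), namely $\prod_{i=0}^{j-1}(q^{\ell}-q^i)$ (resp.\ $\prod_{i=0}^{j-1}(q^{m}-q^i)$), we obtain
$$
\nu_j=\frac{\prod_{i=0}^{j-1}(q^{\ell}-q^i)\cdot\prod_{i=0}^{j-1}(q^{m}-q^i)}{|\GL_j(\Fq)|}, \qquad |\GL_j(\Fq)|=\prod_{i=0}^{j-1}(q^{j}-q^i).
$$

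Finally, using $\prod_{i=0}^{j-1}(q^{a}-q^i)=q^{\binom{j}{2}}\prod_{i=0}^{j-1}(q^{a-i}-1)$ for $a\in\{\ell,m,j\}$ together with $\prod_{i=0}^{j-1}(q^{j-i}-1)=\prod_{i=0}^{j-1}(q^{i+1}-1)$, the three factors $q^{\binom{j}{2}}$ collapse to $q^{2\binom{j}{2}}/q^{\binom{j}{2}}=q^{\binom{j}{2}}$, so that $\nu_j=q^{\binom{j}{2}}\prod_{i=0}^{j-1}(q^{\ell-i}-1)(q^{m-i}-1)/(q^{i+1}-1)=(q-1)\,\hat{\mu}_j(\ell,m)$, and summing over $j=1,\dots,t$ gives the asserted value of $\hat{n}$. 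I do not anticipate any genuine obstacle; the only points needing mild care are the transitivity and freeness of the $\GL_j(\Fq)$-action in the factorization step (which is what yields the symmetric formula for $\nu_j$) and the routine $q$-factorial bookkeeping at the end. If one prefers to avoid factorizations, $\nu_j$ can instead be counted by choosing the column space of $M$ among the $\binom{\ell}{j}_q$ subspaces of dimension $j$ in $\Fq^{\ell}$ and then a linear surjection of $\Fq^{m}$ onto that subspace, of which there are $\prod_{i=0}^{j-1}(q^{m}-q^i)$; expanding the Gaussian binomial coefficient then gives the same formula.
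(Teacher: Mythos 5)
Your proposal is correct. The dimension and nondegeneracy arguments are essentially the same as the paper's (the paper evaluates $X_{ij}$ at an arbitrary nonzero matrix; you evaluate an arbitrary linear form at the elementary matrices $E_{ij}$ — the same observation viewed from the dual side, and your use of $t\ge 1$ to place the $E_{ij}$ in $\Dt$ is consistent with the paper's standing hypotheses). Where you genuinely differ is in counting the matrices of rank $j$: the paper fibers the set $\Ej$ over the Grassmannian of $j$-dimensional row spaces, getting $|\Ej| = {{m}\brack{j}}_q \prod_{i=0}^{j-1}(q^{\ell}-q^i)$ directly, whereas you count pairs $(A,B)$ in a rank factorization $M=AB$ and divide by the order of $\GL_j(\Fq)$, which acts freely and transitively on the factorizations of a fixed $M$. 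Your orbit-counting route has the advantage of producing the manifestly $(\ell,m)$-symmetric expression $q^{\binom{j}{2}}\prod_{i=0}^{j-1}(q^{\ell-i}-1)(q^{m-i}-1)/(q^{i+1}-1)$ in one step, at the modest cost of verifying transitivity and freeness of the $\GL_j(\Fq)$-action; the paper's fibration argument avoids any group action but requires the Gaussian binomial identity to reach the symmetric form (and the paper records the column-space variant, which is your closing alternative, in Remark~\ref{rem:HilbFn}(i)). The final $q$-factorial manipulations and the passage from $|\Dt|$ to $\hat{n}$ via Proposition~\ref{AffProj} are as in the paper.
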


\begin{proof}
If $\hat{M}$ is a nonzero matrix in $\Matlm$, then its $(i,j)^{\rm th}$ entry is nonzero for some $i,j$, and if we let $f=X_{ij}$, then $f\in \Fq[X]_1$ and $f(\hat{M}) \ne 0$. This implies that $\Cpt$ is nondegenerate.
Similarly, it is 
clear that the evaluation map \eqref{EvMap} is injective 
and so $\dim \Cpt = \dim \Ct = \ell m$. Also
$$
\text{length}(\Ct) = |\Dt| = \sum_{j=0}^t  
|\Ej|, \ \text{where} \  \Ej:= \{ M\in \Matlm: \rk (M) =j\}.
$$
The map that sends $M$ to its row-space is clearly a surjection of
$\Ej$ onto the space $G_{j,m}$ of $j$-dimensional subspaces of $\Fq^m$. Moreover for a given  $W\in  G_{j,m}$, the number of $M\in \Matlm$ with row-space $W$ is the number of $\ell\times j$ matrices over $\Fq$ of rank $j$ or equivalently, the number of $j$-tuples of linearly independent 
vectors in $\Fq^{\ell}$. Since $|G_{j,m}|$ is 
the Gaussian binomial coefficient ${{m} \brack{j}}_q$,
we find 
\begin{equation}
\label{mur}
|\Ej| = {{m} \brack{j}}_q \prod_{i=0}^{j-1} (q^{\ell} -q^i) = 
q^{\binom{j}{2}} \prod_{i=0}^{j-1} \frac{ \left( q^{\ell-i}-1\right) \left( q^{m-i}-1\right) }{q^{i+1}-1}.
\end{equation}
Since $|\Ez|=1$, in view of  Proposition \ref{AffProj}, we obtain the desired formula for $\hat{n}$.
\end{proof}

\begin{remark}
\label{rem:HilbFn}
{\rm
(i) Considering column-spaces instead of row-spaces in the above proof, we obtain the alternative
formula $|\Ej| = {{\ell} \brack{j}}_q \prod_{i=0}^{j-1} (q^{m} -q^i) $ for $0\le j\le \ell$. 

(ii) An alternative, albeit rather contrived, way to prove the nondegeneracy of 
$\Cpt$ and to determine its dimension is to establish that the natural embedding $\Dtp \hookrightarrow \PP^{\ell m -1}$ is nondegenerate (i.e., $\Dtp$ is not contained in a hyperplane of $\PP^{\ell m -1}$). To this end, one may observe that if $\EuX$ is a projective algebraic variety in $\PP^{k-1}$ (over a field $\F$ say) defined by a homogeneous ideal $I$ of the polynomial ring $S$ in $k$ variables over $\F$, then the least $k'\le k$ such that $\EuX$ is nondenerate in $\PP^{k'-1}$ is given by the Hilbert function of $\EuX$ evaluated at $1$, i.e., by $\dim S_1/I_1$. In case $\EuX = \Dt$, the Hilbert function is known as a consequence of the straightening law of Doubilet-Rota-Stein or an explicit formula due to Abhyankar (see, e.g., \cite{survey}). One sees, in particular that its value at $1$ is the number of ``standard Young bitableaux of area $1$ and bounded by $(\ell \mid m)$'', which is clearly equal to $\ell m$.
}
\end{remark}
Determining the minimum distance of $\Cpt$ isn't quite obvious. To get some feel for this, let us work out some simple examples and also observe that a bound can be readily obtained in a special case.

\begin{example}
{\rm
(i) If 
$t=\ell = \min\{\ell , m\}$, then $\Dtp = \PP^{\ell m-1}$ and $\Cpt$ is a first order projective Reed-Muller code (cf. \cite{L}), and in fact, a simplex code. 
Evidently, it has length ${(q^{\ell m} -1)}/{(q-1)}$ and minimum distance $q^{\ell m -1}$.

(ii) If $\ell = m = t+1$, then $\Dt = \Matlm\setminus \GL_\ell(\Fq)$ while $\Dtp$ is the hypersurface in $\PP^{\ell^2-1}$ given by $\det (X)=0$. Now the minimum distance, say $\hat{d}$, of the code
${\widehat{C}_{\det}(t;\ell, \ell)}$ corresponding to the projective system $\Dtp\hookrightarrow \PP^{\ell m-1}$ is given by
$$
\hat{d} = \hat{n} - \max_H |\Dtp \cap H |, \quad \text{where} \quad \hat{n} = | \Dtp | =  \frac{q^{\ell^2} -1}{q-1}   - q^{\binom{\ell}{2}} \prod_{i=2}^{\ell} (q^i-1)
$$
denotes the length of ${\widehat{C}_{\det}(t;\ell, \ell)}$ and the maximum is over all hyperplanes $H$ in $\PP^{\ell^2-1}$. The irreducible polynomial $\det(X)$, when restricted to $H$ gives rise to a (possibly reducible) hypersurface in $\PP(H)\simeq \PP^{\ell^2-2}$ of degree $\le \ell$. Hence by Serre's inequality (cf. \cite{Se}; see also \cite{L}),
$$
|\Dtp \cap H | \le \ell q^{\ell^2 -3 } + \frac{q^{\ell^2-3} -1}{q-1}
$$
This yields the following lower bound on the minimum distance of ${\widehat{C}_{\det}(t;\ell, \ell)}$.
$$
\hat{d} \ge q^{\ell^2 -1}  + q^{\ell^2 -2 } - (\ell-1) q^{\ell^2 -3 } - q^{\binom{\ell}{2}} \prod_{i=2}^{\ell} (q^i-1).
$$
In the special case when $\ell=m=2$ and $t=1$, we find $|\Dtp \cap H | \le 2q+ 1$ and $\hat{d} \ge q^{2}$. The Serre bound $2q+1$ is attained if we take $H$ to be any of the coordinate hyperplanes. 
Hence the minimum distance of $\widehat{C}_{\rm det}(1;2,2)$ is $q^2$.
}
\end{example}

\section{Weight Distribution} 
\label{sec:wts}


It turns out that the Hamming weights of codewords of $\Ct$ as well as $\Cpt$ are few in number.

\begin{lemma}
\label{PartialTraceLem}
Let $f(X)=\sum_{i=1}^\ell\sum_{j=1}^m f_{ij}X_{ij}$ be a linear homogeneous polynomial in $\Fq[X]$. Denote by $F=(f_{ij})$ the coefficient matrix of $f$. Then the Hamming weights of the corresponding
codewords $c_f$ of $\Ct$ and $\hat{c}_f$ of $\Cpt $ depend only on $\rk(F)$. In 
fact, if $r= \rk(F)$, then $\wH(c_f) = \wH(c_{\tau_r})$ and  $\wH(\hat{c}_f) = \wH(\hat{c}_{\tau_r})$, where $\tau_r:= X_{11} + \cdots + X_{rr}$ 
is the $r^{\rm th}$ partial trace of $X$.
\end{lemma}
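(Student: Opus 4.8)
The plan is to exploit the natural action of $\GL_\ell(\Fq)\times\GL_m(\Fq)$ on $\Matlm$ given by $(A,B)\cdot M = AMB$. Since $\rk(AMB)=\rk(M)$ whenever $A,B$ are invertible, this action preserves rank and therefore restricts to a bijection of the set $\Dt$ onto itself (it merely permutes $M_1,\dots,M_n$). The first step is to translate the evaluation $f(M)=\sum_{i,j}f_{ij}M_{ij}$ into trace form: one checks directly that $f(M)=\operatorname{tr}(F^\top M)$. Hence, for any $A\in\GL_\ell(\Fq)$ and $B\in\GL_m(\Fq)$, the cyclic invariance of the trace gives $f(AMB)=\operatorname{tr}(F^\top AMB)=\operatorname{tr}\big((A^\top F B^\top)^\top M\big)$, so that $f(AMB)=g(M)$ for all $M$, where $g\in\Fq[X]_1$ is the linear form whose coefficient matrix is $G:=A^\top F B^\top$. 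Because $M\mapsto AMB$ permutes $\Dt$, we get
$$
\wH(c_g)=|\{M\in\Dt: g(M)\ne 0\}| = |\{M\in\Dt: f(AMB)\ne 0\}| = |\{M'\in\Dt: f(M')\ne 0\}| = \wH(c_f).
$$

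The second step is to choose $A,B$ so that $G$ becomes the coefficient matrix of the partial trace. Writing $r=\rk(F)$, the standard reduction of a matrix to rank normal form by row and column operations yields $P\in\GL_\ell(\Fq)$ and $Q\in\GL_m(\Fq)$ with $PFQ=E_r$, where $E_r$ is the $\ell\times m$ matrix whose $(i,i)$ entry is $1$ for $1\le i\le r$ and all of whose other entries are $0$; and $E_r$ is precisely the coefficient matrix of $\tau_r=X_{11}+\cdots+X_{rr}$. Taking $A=P^\top\in\GL_\ell(\Fq)$ and $B=Q^\top\in\GL_m(\Fq)$ in the identity above gives $G=E_r$, hence $\wH(c_{\tau_r})=\wH(c_f)$.

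For the statement about $\Cpt$, one can argue in exactly the same way, noting that $M\mapsto AMB$ sends scalar multiples to scalar multiples and therefore descends to a permutation of the point set of $\Dtp$; alternatively, it is quickest to invoke Proposition \ref{AffProj}, by which $\wH(c_f)=(q-1)\wH(\hat c_f)$ for every $f\in\Fq[X]_1$, so $\wH(\hat c_f)=\wH(\hat c_{\tau_r})$ follows immediately from the affine case. There is no genuine obstacle here beyond the transpose bookkeeping in the trace identity; the only points deserving an explicit line are that the $\GL_\ell\times\GL_m$ action stabilizes $\Dt$ and that $E_r$ really is the coefficient matrix of the $r$-th partial trace.
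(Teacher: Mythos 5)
Your proof is correct and follows essentially the same route as the paper's: both exploit the action $M\mapsto P^{T}MQ^{T}$ of $\GL_\ell(\Fq)\times\GL_m(\Fq)$, observe that it preserves rank (hence permutes $\Dt$) while transforming the coefficient matrix $F$ into $PFQ$, and then reduce $F$ to rank normal form. Your explicit use of the identity $f(M)=\operatorname{tr}(F^{\top}M)$ is just a cleaner way of verifying the coefficient-matrix bookkeeping that the paper asserts directly, and your appeal to Proposition \ref{AffProj} for the projective case matches the paper as well.
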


\begin{proof}
We have seen in the proof of Proposition \ref{AffProj} that $\wH (c_f) = (q-1)\wH(\hat{c}_f)$ and thus it suffices to only consider $\wH(c_f)$. If $P\in \GL_{\ell}(\Fq)$ and $Q\in \GL_m(\Fq)$ are nonsingular matrices, then $g(X)= f(P^TXQ^T)$ is in $\Fq[X]_1$ and its coefficient matrix is $PFQ$. Moreover, the
map $M\mapsto P^TMQ^T$ is a rank-preserving bijection of $\Matlm$ onto itself that induces a bijection of
the support of $c_g$ onto the support of $c_f$. Hence $\wH(c_g) =\wH(c_f)$.
In particular, if $r= \rk(F)$, then we can choose $P$ and $Q$ such that
$$
PFQ=\left[\begin{array}{c|c}I_r & 0\\ \hline 0 & 0\end{array}\right].
$$
Consequently, $\wH(c_f) = \wH(c_{\tau_r})$  and $\wH(\hat{c}_f) = \wH(\hat{c}_{\tau_r})$.
\end{proof}

\begin{corollary}
\label{corwts}
Each of the codes $\Ct$ and $\Cpt$ have at most $\ell+1$ distinct weights, $w_0, w_1, \dots, w_{\ell}$ and
$\hat{w}_0, \hat{w}_1, \dots, \hat{w}_{\ell}$ respectively, given by 
$w_r = \wH(c_{\tau_r})$ and $\hat{w}_r = \wH(\hat{c}_{\tau_r})  = w_r/(q-1)$ for $r=0,1,\dots , \ell$. Moreover, the weight enumerator polynomials $A(Z)$
of $\Ct$ and $ \hat{A} (Z)$ of $\Cpt$ are given by
$$
A(Z) = \sum_{r=0}^{\ell}  \mu_r(\ell, m) Z^{w_r} \quad \text{and} \quad \hat{A}(Z) =   \sum_{r=0}^{\ell}  \mu_r(\ell, m) Z^{\hat{w}_r},
$$
where $\mu_r(\ell, m)$ 
is the number of $\ell\times m$ matrices over $\Fq$ of rank $r$, given by
$$
q^{\binom{r}{2}} \prod_{i=0}^{r-1} \frac{ \left( q^{\ell-i}-1\right) \left( q^{m-i}-1\right) } {q^{i+1}-1} =  {{m} \brack{r}}_q \prod_{i=0}^{r-1} (q^{\ell} -q^i) =  {{\ell} \brack{r}}_q \prod_{i=0}^{r-1} (q^{m} -q^i) .
$$
\end{corollary}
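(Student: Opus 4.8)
The plan is to obtain this as an immediate consequence of Lemma~\ref{PartialTraceLem}, combined with the rank count \eqref{mur} from the proof of Proposition~\ref{DimLength}. First I would observe that every codeword of $\Ct$ (resp.\ $\Cpt$) is $c_f$ (resp.\ $\hat{c}_f$) for a \emph{unique} $f\in\Fq[X]_1$: the affine evaluation map $\Ev\colon\Fq[X]_1\to\Ct$ is injective (recorded in the proof of Proposition~\ref{DimLength}), and the projective evaluation map $f\mapsto\hat{c}_f$ is a linear surjection from $\Fq[X]_1$ onto $\Cpt$ between spaces both of dimension $\ell m$ (Proposition~\ref{DimLength}), hence also an isomorphism. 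Thus $f\mapsto F$ identifies the set of codewords of $\Ct$ (and likewise of $\Cpt$) with $\Matlm$.

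Next I would invoke Lemma~\ref{PartialTraceLem}: writing $r=\rk(F)$, it gives $\wH(c_f)=\wH(c_{\tau_r})$ and $\wH(\hat{c}_f)=\wH(\hat{c}_{\tau_r})$. Since $0\le r\le\ell$ under the standing assumption $\ell\le m$, each code has at most $\ell+1$ distinct weights; setting $w_r:=\wH(c_{\tau_r})$ and $\hat{w}_r:=\wH(\hat{c}_{\tau_r})$ and using $\wH(c_f)=(q-1)\wH(\hat{c}_f)$ from the proof of Proposition~\ref{AffProj} yields $w_r=(q-1)\hat{w}_r$. (Note that $\tau_0$ is the empty sum, so $c_{\tau_0}=0$ and $w_0=\hat{w}_0=0$, consistent with $\mu_0(\ell,m)=1$.)

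Finally, for the weight enumerators: under the identification above, the number of codewords of $\Ct$ of weight $w_r$ equals the number of $F\in\Matlm$ with $\rk(F)=r$, which is exactly $\mu_r(\ell,m)=|\Ej|$ as computed in \eqref{mur}; the equality of the three displayed forms of $\mu_r(\ell,m)$ is precisely \eqref{mur} together with Remark~\ref{rem:HilbFn}(i). Summing over $r=0,1,\dots,\ell$ gives $A(Z)=\sum_{r=0}^{\ell}\mu_r(\ell,m)Z^{w_r}$, and the formula for $\hat{A}(Z)$ follows in the same way (or from $A_{i(q-1)}=\hat{A}_i$ in Proposition~\ref{AffProj}). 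There is no real obstacle here; the only point worth a word of care is that these remain valid polynomial identities even when $w_r=w_s$ for some $r\ne s$, since the sums merely record the partition of the codewords according to the rank of their coefficient matrices — so the statement genuinely asserts ``at most $\ell+1$ distinct weights,'' with equality not claimed at this stage.
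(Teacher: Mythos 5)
Your proposal is correct and follows essentially the same route as the paper: the paper's proof is just the terse observation that $\rk(F)\le\ell$ since $\ell\le m$, so the claim follows from Lemma~\ref{PartialTraceLem} together with \eqref{mur} and Remark~\ref{rem:HilbFn}(i). You merely spell out the details the paper leaves implicit (injectivity of the evaluation maps identifying codewords with coefficient matrices, and the caveat that the enumerator formula is a valid identity even if some $w_r$ coincide), both of which are accurate and welcome clarifications.
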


\begin{proof}
Since $\ell \le m$, the rank of any $F\in \Matlm$ is at most $\ell$. Thus the desired result
follows from Lemma \ref{PartialTraceLem} together with \eqref{mur} and part (i) of Remark \ref{rem:HilbFn}. 
\end{proof}

We remark that it is not clear, {\em a priori}, that the weights $w_r$ are distinct for distinct values of $r$.  Also it isn't clear which of the nonzero weights $w_1, \dots , w_r$ is the least. But the weight distribution or the spectrum is completely determined once we solve the combinatorial problem of counting the number of $\ell \times m$ matrices $M$ over $\Fq$ of rank $\le t$ for which $\tau_r(M)\ne 0$.
As indicated in the Introduction, Delsarte \cite{D} solved an essentially equivalent problem of determining the number $N_t(r)$ of 
$M \in \Matlm(\Fq)$ of rank $t$ with $\tau_r(M)\ne 0$, and showed: 
$$
N_t(r) = \frac{ (q-1 )}{q} \left( \mu_t (\ell,m) -
 \sum_{i=0}^{\ell} (-1)^{t-i} q^{im + \binom{t - i}{2}} {{\ell -i}\brack{\ell -t}}_q {{\ell -r}\brack{i}}_q  \right),
$$
where $\mu_t (\ell,m)$ is as in Corollary~\ref{corwts} above.  Consequently, the nonzero weights of $\Ct$ are given by
$w_r = \sum_{s=1}^t N_s(r)$ for $r=1, \dots , \ell$. However, for a fixed $t$ (even in the simple case $t=1$), it is not entirely obvious how $w_1, \dots , w_{\ell}$ 
are ordered and which among them is the least. In the next section, we circumvent these difficulties and use a direct approach in the case $t=1$.

\section{Case of $2\times 2$ minors}

In this section we consider the determinantal variety ${\mathcal D}_1$ defined by the vanishing of all $2\times 2$ minors of $X$, and show that the weight distribution of the corresponding code is explicitly determined in this case. We begin by recalling an elementary and well-known characterization of rank
$1$ matrices as outer (or dyadic) products of nonzero vectors.

\begin{proposition}
\label{Rank1}
Let $\F$ be a field and let $M \in\Matlm(\F)$. 
Then $\rk (M)=1$ if and only if there are nonzero (row) vectors $\mathbf{u}\in \F^{\ell}$ and $\mathbf{v} \in \F^m$ such that
$M= \mathbf{u}^T\mathbf{v}$. Moreover, if $\mathbf{u}^T\mathbf{v}= \mathbf{a}^T\mathbf{b}$
for  nonzero $\mathbf{u}, \mathbf{a} \in \F^{\ell} $ and $ \mathbf{v}, \mathbf{b}\in \F^{m}$, then
$ \mathbf{a}  = \lambda  \mathbf{u}$ and $ \mathbf{b} = \lambda^{-1} \mathbf{v}$
for a unique $\lambda \in \F$ with $\lambda\ne 0$.
\end{proposition}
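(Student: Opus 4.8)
The plan is to prove the two implications of the equivalence separately and then to extract the uniqueness statement by comparing matrix entries; this is all elementary linear algebra, so I do not expect any real obstacle — the only thing requiring care is keeping track of indices at which the relevant vectors do not vanish.

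For the \emph{if} direction I would start from $M=\mathbf{u}^T\mathbf{v}$ with $\mathbf{u}=(u_1,\dots,u_\ell)$ and $\mathbf{v}$ both nonzero, observe that the $i$-th row of $M$ is $u_i\mathbf{v}$, so every row lies on the line $\F\mathbf{v}$ and hence the row space has dimension at most $1$; since $\mathbf{u}\neq 0$ some row $u_i\mathbf{v}$ is nonzero, forcing $\rk(M)=1$. For the \emph{only if} direction I would use that $\rk(M)=1$ means the row space is spanned by a single nonzero $\mathbf{v}\in\F^m$, so the $i$-th row of $M$ equals $u_i\mathbf{v}$ for a (unique) scalar $u_i$; setting $\mathbf{u}=(u_1,\dots,u_\ell)$ gives $M=\mathbf{u}^T\mathbf{v}$, and $\mathbf{u}\neq 0$ because $M\neq 0$.

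For the uniqueness part, assuming $\mathbf{u}^T\mathbf{v}=\mathbf{a}^T\mathbf{b}$ with all four vectors nonzero, I would fix an index $i$ with $u_i\neq 0$ and an index $k$ with $v_k\neq 0$. Equating the $i$-th rows gives $u_i\mathbf{v}=a_i\mathbf{b}$; the left-hand side being nonzero forces $a_i\neq 0$, and with $\lambda:=a_i/u_i\in\F\setminus\{0\}$ this reads $\mathbf{b}=\lambda^{-1}\mathbf{v}$. Then equating the $(j,k)$-entries for arbitrary $j$ yields $u_jv_k=a_jb_k=\lambda^{-1}a_jv_k$, and dividing by $v_k\neq 0$ gives $a_j=\lambda u_j$, i.e.\ $\mathbf{a}=\lambda\mathbf{u}$. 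Uniqueness of $\lambda$ is immediate: if $\lambda\mathbf{u}=\lambda'\mathbf{u}$ with $\mathbf{u}\neq 0$ then $\lambda=\lambda'$. As noted, the main (and only) point to watch is that such indices $i$ and $k$ exist, which is exactly the hypothesis that $\mathbf{u}$ and $\mathbf{v}$ are nonzero.
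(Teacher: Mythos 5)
Your proof is correct and complete; all three parts (both implications and the uniqueness of $\lambda$) are handled by the standard row-space and entrywise-comparison arguments, and you correctly isolate the one point needing care, namely the existence of indices $i$ with $u_i\neq 0$ and $k$ with $v_k\neq 0$. The paper itself gives no proof of this proposition — it is introduced merely as ``an elementary and well-known characterization'' — so there is no authorial argument to compare against; your write-up supplies exactly the routine verification the authors chose to omit.
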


The complete weight distribution of determinantal codes in the case $t=1$ is given by the following theorem together with  Corollary \ref{corwts}. In the statement of the theorem, we restrict to $\Cpt$, but the corresponding result for $\Ct$ when $t=1$ is readily obtained (and is evident from the proof).

\begin{theorem}
\label{Wts}
The 
nonzero weights of $\Cpone$ are $\hat{w}_1, \dots , \hat{w}_{\ell}$, given by
$$
\hat{w}_r =  \wH(\hat{c}_{\tau_r}) = q^{\ell + m -2} + q^{\ell + m-3} + \dots +  q^{\ell + m -r-1} = q^{\ell + m -r -1} \frac{(q^r-1)}{q-1} 
$$
for $r=1, \dots , \ell$.
In particular, $\hat{w}_1 < \hat{w}_2 < \dots  < \hat{w}_{\ell}$
and the minimum distance of $\Cpone$ is $q^{\ell + m -2}$.
\end{theorem}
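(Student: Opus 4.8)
The plan is to compute $\hat w_r=\wH(\hat c_{\tau_r})$ by a direct count over $\Fq$. By Corollary~\ref{corwts} together with Proposition~\ref{AffProj} it suffices to determine $w_r=\wH(c_{\tau_r})$, since $\hat w_r=w_r/(q-1)$. By definition $w_r$ is the number of matrices $M\in\Det_1$, i.e.\ $M\in\Matlm$ with $\rk(M)\le 1$, for which $\tau_r(M)=M_{11}+\dots+M_{rr}\ne 0$. The zero matrix contributes nothing, so only rank-one matrices are relevant. By Proposition~\ref{Rank1}, every rank-one $M$ equals $\mathbf u^{T}\mathbf v$ with $\mathbf u\in\Fq^{\ell}$, $\mathbf v\in\Fq^{m}$ nonzero, the pair $(\mathbf u,\mathbf v)$ being determined by $M$ only up to $(\mathbf u,\mathbf v)\mapsto(\lambda\mathbf u,\lambda^{-1}\mathbf v)$ for $\lambda\in\Fq$, $\lambda\ne0$; hence each such $M$ arises from exactly $q-1$ pairs. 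Since $M_{ii}=u_iv_i$, we have $\tau_r(M)=\sum_{i=1}^{r}u_iv_i$, and therefore
$$
(q-1)\,w_r \;=\; \left|\left\{(\mathbf u,\mathbf v)\in\Fq^{\ell}\times\Fq^{m}\ :\ \sum_{i=1}^{r}u_iv_i\ne 0\right\}\right|,
$$
where I used the automatic fact that $\sum_{i=1}^{r}u_iv_i\ne 0$ already forces $\mathbf u\ne 0$ and $\mathbf v\ne 0$, so the rank-one constraint need not be imposed separately.

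The remaining count is elementary. The coordinates $u_{r+1},\dots,u_{\ell}$ and $v_{r+1},\dots,v_{m}$ are unconstrained, contributing a factor $q^{\ell+m-2r}$, so it remains to count pairs $(\mathbf a,\mathbf b)\in\Fq^{r}\times\Fq^{r}$ with $\mathbf a\cdot\mathbf b\ne 0$. By complementary counting, $\mathbf a\cdot\mathbf b=0$ holds for all $q^{r}$ vectors $\mathbf b$ when $\mathbf a=0$ and for exactly $q^{r-1}$ vectors $\mathbf b$ when $\mathbf a\ne 0$, so the number of pairs with $\mathbf a\cdot\mathbf b\ne0$ is $q^{2r}-q^{r}-(q^{r}-1)q^{r-1}=(q-1)q^{r-1}(q^{r}-1)$. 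Multiplying by $q^{\ell+m-2r}$ gives $(q-1)w_r=(q-1)q^{\ell+m-r-1}(q^{r}-1)$, hence $w_r=q^{\ell+m-r-1}(q^{r}-1)$ (which is the affine statement) and
$$
\hat w_r=\frac{w_r}{q-1}=q^{\ell+m-r-1}\,\frac{q^{r}-1}{q-1}=q^{\ell+m-2}+q^{\ell+m-3}+\dots+q^{\ell+m-r-1},
$$
as claimed. Since $\hat w_{r+1}-\hat w_r=q^{\ell+m-r-2}>0$ for $1\le r\le\ell-1$, we get $\hat w_1<\dots<\hat w_{\ell}$; these $\ell$ numbers are thus pairwise distinct and positive, so by Corollary~\ref{corwts} they are precisely the nonzero weights of $\Cpone$, and the minimum distance is the least of them, namely $\hat w_1=q^{\ell+m-2}$.

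I do not expect a genuine obstacle here: once Propositions~\ref{AffProj} and \ref{Rank1} and Corollary~\ref{corwts} are in hand, everything reduces to counting vectors over $\Fq$ with a prescribed (non)vanishing of a truncated dot product. The only points requiring care are the bookkeeping of the two distinct factors of $q-1$ — one from passing between $\Cone$ and $\Cpone$, the other from the redundancy in the decomposition $M=\mathbf u^{T}\mathbf v$ — and the minor but convenient observation that $\sum_{i=1}^{r}u_iv_i\ne 0$ subsumes $\mathbf u,\mathbf v\ne 0$; the hypothesis $r\le\ell\ (\le m)$ is what makes $\tau_r$ and the ``free coordinates'' count meaningful.
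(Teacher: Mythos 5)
Your proposal is correct and follows essentially the same route as the paper: reduce to $w_r=\wH(c_{\tau_r})$ via Proposition~\ref{AffProj} and Corollary~\ref{corwts}, use Proposition~\ref{Rank1} to write rank-one matrices as $\mathbf u^T\mathbf v$ with a $(q-1)$-fold redundancy, and count pairs with $\sum_{i\le r}u_iv_i\ne 0$. Your complementary count of pairs $(\mathbf a,\mathbf b)$ with $\mathbf a\cdot\mathbf b\ne 0$ gives exactly the paper's factor $(q^r-1)(q^r-q^{r-1})$, just organized slightly differently.
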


\begin{proof}
Fix $r\in \{1, \dots , \ell\}$ and let $\tau_r$ be as in Lemma \ref{PartialTraceLem} and $c_{\tau_r}$ the corresponding element of $\Cone$. Write $w_r:= \wH(c_{\tau_r})$.
In view of Proposition \ref{AffProj} and Corollary \ref{corwts}, it suffices to show that 
$w_r= q^{\ell + m -r -1} {(q^r-1)}$. 
To this end, first observe that
$w_r = \left| \{ M\in \Matlm(\Fq) : \rk(M)=1 \text{ and } \tau_r(M)\ne 0\}\right|$.
By Proposition \ref{Rank1}, every $M\in \Matlm$ of rank $1$ is of the form $\mathbf{u}^T\mathbf{v}$
for some nonzero $\mathbf{u} = (u_1, \dots , u_{\ell})$ and $\mathbf{v}=(v_1, \dots , v_m)$. Now $\tau_r(M) = u_1v_1+ \cdots + u_rv_r$ and for it to be nonzero, we can choose $\mathbf{u}^{(r)}:= (u_1, \dots , u_r)$ in $q^r -1$ ways
(excluding the zero vector) and $(v_1, \dots, v_r)$ to be any element of $\Fq^r$ that is not in the kernel of the restricted inner product map of $\phi: \Fq^r \to \Fq$
defined by $\phi(\mathbf{x}):= \mathbf{u}^{(r)} \mathbf{x}^T$. Since $\phi$ is a nonzero linear map, the kernel is a $\Fq$-vector space of dimension $r-1$, and 
so $(v_1, \dots, v_r)$ can be chosen in $q^r - q^{r-1}$ ways. Finally, the remaining coordinates
$u_{r+1}, \dots , u_{\ell}$ of $\mathbf{u}$ can be chosen arbitrarily in $q^{\ell -r}$ ways while the remaining coordinates of
$\mathbf{v}$ can be chosen in $q^{m-r}$ ways. Since $\mathbf{u}$ and $\mathbf{v}$ are determined by
$M$ only up to scaling by a nonzero element of $\Fq$, it follows that
$$
w_r = \frac{(q^r-1)\left(q^r - q^{r-1} \right) q^{\ell -r}q^{m -r}}{q-1} = q^{\ell + m -r -1} {(q^r-1)},
$$
as desired.
\end{proof}

\begin{remark}
{\rm
It may be noted that the exponent $\ell + m -2$ of $q$ in the minimum distance $\Cpone$ is precisely the dimension of the determinantal variety $\Dtp$ when $t=1$. Also,
specializing Proposition \ref{DimLength} to $t=1$ we see that the length $n$ of $\Cpone$ is
$(q^{\ell}-1)(q^m-1)/(q-1)^2$, which is a monic polynomial in $q$ of degree $\ell+m-2$.
It follows that the relative distance $\delta = d/n$ of $\Cpone$ is asymptotically equal to $1$ as $q\to \infty$. On the other hand, the rate $R=k/n$ is quite small as $q\to \infty$, but it tends to $1$ as $q\to 1$.
}
\end{remark}

%

We now turn to the determination of the higher weights or the generalized Hamming weights of $\Cpone$.
As remarked in the Introduction, the first $\ell$ higher weights $\hat{d}_1, \dots , \hat{d}_{\ell}$ coincide with the nonzero weights $\hat{w}_1, \dots , \hat{w}_{\ell}$ given by Theorem \ref{Wts}. Of course there are many more higher weights, namely, $\hat{d}_1, \dots , \hat{d}_k$, where $k=\ell m$, that are to be determined. It turns out that it is easy to find the first $m$ of them and also to show that these meet the Griesmer-Wei bound.

\begin{theorem}
\label{HigherWts}
For $ r=1, \dots , m$, the $r^{\rm th}$ higher weight $\hat{d}_r$ of $\Cpone$ meets the Griesmer-Wei bound and is given by
$$
\hat{d}_r = q^{\ell + m -2} + q^{\ell + m-3} + \dots +  q^{\ell + m -r-1} = q^{\ell + m -r -1} \frac{(q^r-1)}{q-1} .
$$
In particular, if $r\le \ell$ and $\hat{w}_r$ is as in  Theorem \ref{Wts}, then $\hat{d}_r =\hat{w}_r$.
\end{theorem}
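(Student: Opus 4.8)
The plan is to establish both a lower bound and a matching upper bound for $\hat{d}_r$ when $1 \le r \le m$. The lower bound will come from the Griesmer--Wei bound: since $\Cpone$ has (by Theorem~\ref{Wts}) minimum distance $\hat{d}_1 = \hat{w}_1 = q^{\ell+m-2}$, the general Griesmer--Wei inequality $\hat{d}_r \ge \hat{d}_{r-1} + \lceil \hat{d}_1/q^{r-1}\rceil$ gives, by an easy induction, $\hat{d}_r \ge q^{\ell+m-2} + q^{\ell+m-3} + \cdots + q^{\ell+m-r-1}$ for all $r$ in the stated range (here one uses $r \le m \le \ell + m - 2$ when $\ell \ge 2$, so that the floor/ceiling terms are exact powers of $q$; the degenerate cases $\ell = 1$ or $m = 1$ can be checked separately or dismissed). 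So the real content is the construction of an $r$-dimensional subcode $\hat{D} \subseteq \Cpone$ whose support weight $\Vert \hat{D}\Vert$ equals exactly this value.

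To build such a subcode, I would work on the affine side with $\Cone$ (using Proposition~\ref{AffProj}, which says $d_r = (q-1)\hat{d}_r$ and that subcodes correspond), and exhibit an $r$-dimensional space $D$ of codewords $c_f$, i.e. an $r$-dimensional space $U$ of coefficient matrices $F \in \Matlm(\Fq)$, for which the union of supports is small. The support of $c_f$ is $\{M : \rk(M)=1,\ \langle F, M\rangle \ne 0\}$ where $\langle F, M\rangle = \sum_{ij} f_{ij} M_{ij}$; writing $M = \mathbf{u}^T\mathbf{v}$ this pairing is $\mathbf{u} F \mathbf{v}^T$. So $\Vert D\Vert = |\{\mathbf{u}^T\mathbf{v} : \mathbf{u}F\mathbf{v}^T \ne 0 \text{ for some } F \in U\}|$. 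The natural choice is to take $U$ to be the span of $X_{11}, X_{21}, \dots, X_{r1}$ — that is, matrices supported on the first column, realizing $U \cong \Fq^r$ acting as $F_{\mathbf{a}}$ with $\mathbf{u}F_{\mathbf{a}}\mathbf{v}^T = (\mathbf{u}\cdot\mathbf{a})\,v_1$. Then a rank-$1$ matrix $\mathbf{u}^T\mathbf{v}$ lies in $\Vert D\Vert$ iff $v_1 \ne 0$ and $\mathbf{u} \notin \{\mathbf{a}\}^\perp$ for some $\mathbf{a}$ among the first $r$ coordinate directions, i.e. iff $v_1 \ne 0$ and $\mathbf{u}$ is not orthogonal to the whole span, i.e. iff $v_1 \ne 0$ and $(u_1,\dots,u_r) \ne \mathbf{0}$. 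Counting these rank-$1$ matrices up to scalar: choose $(u_1,\dots,u_r)$ nonzero ($q^r-1$ ways), $(u_{r+1},\dots,u_\ell)$ freely ($q^{\ell-r}$), $v_1$ nonzero ($q-1$), $(v_2,\dots,v_m)$ freely ($q^{m-1}$), then divide by $q-1$ for scaling — giving $(q^r-1)q^{\ell-r+m-1}$ affine points, hence $\hat{d}_r \le (q^r-1)q^{\ell+m-r-1}/(q-1) = q^{\ell+m-2}+\cdots+q^{\ell+m-r-1}$, matching the lower bound. (This also transparently recovers Theorem~\ref{Wts}'s $\hat{w}_r$ when $r \le \ell$, via the single-matrix subcode $\tau_r$.)

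The two bounds coincide, so $\hat{d}_r$ equals the stated value and meets the Griesmer--Wei bound; and for $r \le \ell$ the formula is literally the $\hat{w}_r$ of Theorem~\ref{Wts}, giving the final ``in particular'' assertion. I expect the main obstacle to be purely bookkeeping: making sure the Griesmer--Wei recursion is applied correctly (checking that each ceiling $\lceil \hat{d}_1/q^{i}\rceil$ is an honest power of $q$ for $i \le r-1 \le m-1$, which needs $\ell+m-2 \ge m-1$, automatic for $\ell \ge 1$), and verifying that the column-supported construction genuinely has dimension $r$ over $\Fq$ (clear, since $X_{11},\dots,X_{r1}$ are linearly independent linear forms and $\Ev$ is injective by Proposition~\ref{DimLength}) and that no rank-$1$ matrix outside the counted set can be in the support (immediate from $\mathbf{u}F_{\mathbf{a}}\mathbf{v}^T = (\mathbf{u}\cdot\mathbf{a})v_1$). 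There is no deep difficulty here; the point worth emphasizing in the writeup is the clean ``first column'' construction, which simultaneously explains why the small higher weights coincide with the small codeword weights.
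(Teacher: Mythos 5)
Your strategy is the same as the paper's: the Griesmer--Wei bound supplies the lower bound, and an explicit $r$-dimensional subcode whose nonzero codewords all have rank-one coefficient matrices supplies the matching upper bound. But there is one concrete defect in the construction. You span your subspace by $X_{11}, X_{21}, \dots, X_{r1}$, i.e.\ by entries of the \emph{first column} of $X$. That column has only $\ell$ entries, and the standing convention of the paper is $\ell \le m$; so for $\ell < r \le m$ the forms $X_{r1}$ do not exist, and correspondingly your count of ``$(u_1,\dots,u_r)$ nonzero, $(u_{r+1},\dots,u_\ell)$ free'' is meaningless since $\mathbf{u}\in\Fq^{\ell}$ has only $\ell$ coordinates. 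This is exactly the range in which Theorem~\ref{HigherWts} says something beyond Theorem~\ref{Wts} (for $r\le\ell$ the upper bound already follows from the single codeword $\tau_r$, as you note). The repair is immediate and is what the paper does: take the \emph{first row}, $L_r=\langle X_{11},\dots,X_{1r}\rangle$, which is defined for all $r\le m$; the identical computation with the roles of $\mathbf{u}$ and $\mathbf{v}$ exchanged gives $\Vert D_r\Vert = q^{\ell+m-r-1}(q^r-1)/(q-1)$. As written, though, your proof covers only $r\le\ell$.

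Two smaller points. First, the recursive inequality $\hat{d}_r\ge \hat{d}_{r-1}+\lceil \hat{d}_1/q^{r-1}\rceil$ is not the form of the Griesmer--Wei bound cited in the paper and is stronger than what you need; the summed form $\hat{d}_r\ge\sum_{j=0}^{r-1}\lceil \hat{d}_1/q^{j}\rceil$ from \cite{TV2} is what the paper invokes and already yields $q^{\ell+m-r-1}(q^r-1)/(q-1)$, since every ceiling is an exact power of $q$ for $j\le r-1\le m-1\le \ell+m-2$. Second, where the paper computes $\Vert D_r\Vert$ via the averaging formula $\Vert D\Vert=(q^r-q^{r-1})^{-1}\sum_{c\in D}\wH(c)$ of \cite{GPP} (using that every nonzero codeword of $D_r$ has weight $\hat{w}_1$), you count the union of supports directly; both are correct, and your direct count is a perfectly transparent alternative once the row/column issue above is fixed.
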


\begin{proof} Fix $r\in \{1, \dots , m\}$ and let $L_r$ be the $r$-dimensional subspace of $\Fq[X]_1$ generated by $X_{11}, \dots , X_{1r}$. Also let $D_r = \Ev (L_r)$ be the corresponding subcode of $\Cpone$. Since $\Ev$ is injective linear, $\dim D_r = r$. Moreover, since the coefficient matrix of
any $f\in L_r$ has all rows except the first consist entirely of zeros, it follows from Lemma \ref{PartialTraceLem} and Theorem \ref{PartialTraceLem} that $\wH(c) = \hat{w}_1 =  q^{\ell + m -2}$ for all nonzero $c\in D_r$. We now use a  well-known formula for the support weight of an $r$-dimensional subcode (see, e.g., \cite[Lemma 12]{GPP}) to obtain
$$
 \| D_r \| = \frac{1}{q^r - q^{r-1}} \sum_{c\in D_r} \wH(c) = \frac{1}{q^r - q^{r-1}}  (q^r - 1) q^{\ell + m -2} = q^{\ell + m -r -1} \frac{(q^r-1)}{q-1}.
$$
On the other hand,  the Griesmer-Wei bound \cite[Corollary 3.3]{TV2} gives
$$
\hat{d}_r \ge \sum_{j=0}^{r-1} \left\lceil \frac{\hat{d}_1}{q^j} \right\rceil = \sum_{j=0}^{r-1} q^{\ell + m -2-j} = q^{\ell + m -r -1} \frac{(q^r-1)}{q-1}. 
$$
The last two displayed equations imply the desired formula for $\hat{d}_r$ and show that the   Griesmer-Wei bound is met, and also that $\hat{d}_r = \hat{w}_r$  if $r\le \ell$.
\end{proof}
%

We can push the techniques used in the  above proof  to obtain lower and upper bounds for some of the subsequent higher weights, 
$\hat{d}_r$ for $m< r < \ell + m$. 

\begin{lemma}
\label{lowerupper}
Assume that $\ell \ge 2$. Then
for $ s=1, \dots , \ell-1$, the $(m+s)^{\rm th}$ higher weight $\hat{d}_{m+s} $
of $\Cpone$ satisfies
$$
q^{\ell  - s -1} \frac{(q^{m+s}-1)}{q-1} = \hat{d}_m +  q^{\ell  - s -1} \frac{(q^{s}-1)}{q-1}  \le \hat{d}_{m+s} \le
\hat{d}_m +  q^{\ell +m - s -2} \frac{(q^{s}-1)}{q-1} ,
$$
where $\hat{d}_m$ is as in Theorem \ref{HigherWts}. In particular, $\hat{d}_m +  q^{\ell  - 2} \le \hat{d}_{m+1} \le \hat{d}_m +  q^{\ell  +m -3}$.
\end{lemma}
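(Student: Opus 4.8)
The plan is to mimic the strategy of Theorem \ref{HigherWts}: for the upper bound, exhibit an explicit $(m+s)$-dimensional subcode and compute its support weight via the averaging formula $\|D\| = (q^r-q^{r-1})^{-1}\sum_{c\in D}\wH(c)$; for the lower bound, combine the Griesmer-Wei-type monotonicity of higher weights with the exact value of $\hat{d}_m$ from Theorem \ref{HigherWts}. For the upper bound I would take the subspace $L = L_{m+s}\subseteq \Fq[X]_1$ spanned by $X_{11},\dots,X_{1m}$ together with $X_{22},\dots,X_{(s+1)(s+1)}$ — that is, the full first row together with $s$ further diagonal entries — and set $D = \Ev(L)$, which has dimension $m+s$ since $\Ev$ is injective. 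Every nonzero $f\in L$ has coefficient matrix of rank $1$ or $2$: rank $1$ when $f$ involves only the first row (there are $q^m-1$ such $f$, each contributing weight $\hat{w}_1 = q^{\ell+m-2}$), and rank $2$ otherwise (the remaining $q^{m+s}-q^m$ nonzero $f$, each of weight $\hat{w}_2 = q^{\ell+m-2}+q^{\ell+m-3}$ by Theorem \ref{Wts} and Lemma \ref{PartialTraceLem}). Plugging these counts into the averaging formula gives, after simplification,
\[
\|D\| = \frac{(q^m-1)q^{\ell+m-2} + (q^{m+s}-q^m)\bigl(q^{\ell+m-2}+q^{\ell+m-3}\bigr)}{q^{m+s}-q^{m+s-1}},
\]
which I expect to collapse exactly to $\hat{d}_m + q^{\ell+m-s-2}\tfrac{q^s-1}{q-1}$; since $\hat{d}_{m+s}\le\|D\|$, this yields the stated upper bound.

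For the lower bound I would use the general fact (a consequence of the Wei duality / monotonicity, or directly of the Griesmer-Wei bound in the form $\hat{d}_{r+1}\ge\hat{d}_r + \lceil \hat{d}_1/q^r\rceil$) applied starting from $\hat{d}_m$: iterating $s$ times gives
\[
\hat{d}_{m+s} \ge \hat{d}_m + \sum_{j=m}^{m+s-1}\left\lceil\frac{\hat{d}_1}{q^j}\right\rceil = \hat{d}_m + \sum_{j=m}^{m+s-1} q^{\ell+m-2-j} = \hat{d}_m + q^{\ell-s-1}\frac{q^s-1}{q-1},
\]
using $\hat{d}_1 = q^{\ell+m-2}$ and noting that the ceilings are exact because $\ell+m-2-j\ge \ell-s-1\ge 0$ for $j\le m+s-1\le m+\ell-2$. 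Combining with $\hat{d}_m = q^{\ell-1}\tfrac{q^m-1}{q-1}$ (Theorem \ref{HigherWts}) and checking the telescoping identity $q^{\ell-1}\tfrac{q^m-1}{q-1} + q^{\ell-s-1}\tfrac{q^s-1}{q-1} = q^{\ell-s-1}\tfrac{q^{m+s}-1}{q-1}$ establishes the leftmost expression. The special case $s=1$ is then immediate.

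The main obstacle is the upper bound — specifically, verifying that the subcode $D = \Ev(L)$ I propose actually has the claimed weight distribution, i.e. that every nonzero $f\in L$ not supported on the first row alone has coefficient matrix of rank exactly $2$ (never higher), so that Lemma \ref{PartialTraceLem} forces weight $\hat{w}_2$. This is where the specific choice of generators matters: the coefficient matrix of a general element of $L$ has a full first row plus a diagonal block of size $s$ in rows/columns $2,\dots,s+1$, and one must check this has rank $\le 2$. That holds because such a matrix is the sum of a rank-$1$ matrix (the first row) and a diagonal matrix supported on rows $2,\dots,s+1$; but a diagonal matrix of that shape can have rank up to $s$, so the claim as stated is actually false for $s\ge 2$ with this choice. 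I would therefore instead choose $L$ spanned by $X_{11},\dots,X_{1m}$ together with $X_{21}, X_{31},\dots,X_{(s+1)1}$ — extra entries in the \emph{first column} — so that every $f\in L$ has coefficient matrix with nonzero entries only in the first row and first column; such a matrix always has rank $\le 2$ (it is a first row plus a first column, overlapping in the $(1,1)$ entry), and rank exactly $1$ precisely when the first-column-below-diagonal part vanishes. This repairs the argument while keeping the same count $q^m-1$ of rank-$1$ elements and $q^{m+s}-q^m$ of rank-$2$ elements, so the averaging computation goes through unchanged. Getting this combinatorial bookkeeping right — and double-checking the two closed-form simplifications telescope as claimed — is the only real work; everything else is a direct appeal to Theorem \ref{Wts}, Theorem \ref{HigherWts}, Lemma \ref{PartialTraceLem}, and the Griesmer-Wei bound.
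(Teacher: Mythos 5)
Your overall strategy matches the paper's: the lower bound via the Griesmer--Wei bound (your iterated form is legitimate here because $\hat{d}_m$ attains that bound by Theorem \ref{HigherWts}, and your telescoping identity checks out), and the upper bound via an explicit $(m+s)$-dimensional subcode evaluated with the averaging formula. Your self-correction to the subspace spanned by $X_{11},\dots,X_{1m}$ and $X_{21},\dots,X_{(s+1)1}$ is exactly the subspace $L_{m+s}$ the paper uses. However, there is a genuine error in your final bookkeeping: after switching to this first-row-plus-first-column subspace you assert that the rank-$1$ elements are still exactly the $q^m-1$ nonzero $f$ supported on the first row, i.e.\ that the coefficient matrix has rank $1$ precisely when the column part below the $(1,1)$ entry vanishes. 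This is false: any nonzero $f$ with $f_{1j}=0$ for $2\le j\le m$ but $f_{i1}\ne 0$ for some $i\ge 2$ has coefficient matrix supported entirely on the first column, hence rank $1$. The paper isolates these as a separate class ($\EuF_2$, of size $q(q^s-1)$), so the correct counts are $(q^m-1)+q(q^s-1)$ matrices of rank $1$ and $q^{m+s}-q^m-q^{s+1}+q$ of rank $2$.

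This is not a harmless slip. With your counts the averaging formula yields
$$
\| D \| = \hat{d}_m + q^{\ell+m-s-2}\,\frac{q^s-1}{q-1} + q^{\ell-s-1}\,\frac{q^s-1}{q-1},
$$
which exceeds the claimed upper bound by $q^{\ell-s-1}(q^s-1)/(q-1)$ (the $q(q^s-1)$ misclassified matrices each contribute an extra $q^{\ell+m-3}$ to the numerator). In particular, for $s=1$ you would only obtain $\hat{d}_{m+1}\le \hat{d}_m+q^{\ell+m-3}+q^{\ell-2}$, which is too weak to combine with the lower bound of Theorem \ref{furtherdr} to determine $\hat{d}_{m+1}$ exactly. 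Correcting the count of rank-$1$ elements restores the paper's computation and the stated bound; the rest of your argument is sound.
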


\begin{proof} Fix $s\in \{1, \dots , \ell -1\}$. The lower bound for $ \hat{d}_{m+s}$ is precisely the Griesmer-Wei bound, and is obtained exactly as in the proof of Theorem \ref{HigherWts}. To obtain the upper bound, consider the subspace $L_{m+s}$ of $\Fq[X]_1$ spanned by $X_{11}, \dots , X_{1m}$ and $X_{21}, \dots , X_{s+1 \, 1}$.  
Any nonzero $f = \sum_{j=1}^m f_{1j} X_{1j} + \sum_{i=2}^{s+1} f_{i1} X_{i1}$ in $L_{m+s}$ is in either of the following three disjoint classes: (i)  $\EuF_1$ consisting of those $f\ne 0$ for which $f_{i1}=0$ for $2\le i \le s+1$, (ii)
$\EuF_2$ consisting of those $f\ne 0$ for which $f_{i1} \ne 0$ for some $i$ with $2\le i \le s+1$ and $f_{1j}=0$ for $2\le j \le m$, and (iii) $\EuF_3$ consisting of all the other $f\ne 0$. Note that 
$$
|\EuF_1| = (q^m-1), \  \, |\EuF_2| = q(q^s-1), \text{ and } \, |\EuF_3| = \left(q^{m+s}-1\right) - (q^m-1) - q(q^s-1).
$$
Also note that the coefficient matrix $F=\left(f_{ij}\right)$ of $f$ has rank $1$ if $f\in \EuF_1 \cup \EuF_2$, and rank $2$   if $f\in \EuF_3$.
Now  let  $D_{m+s} = \Ev\left( L_{m+s}\right)$ be the corresponding 
subcode of $\Cpone$.
Then $\dim D_{m+s} = m+s$ and 
by Lemma \ref{PartialTraceLem} and Theorem \ref{Wts}, 
\begin{eqnarray*}
 \| D_{m+s} \| &=& \frac{1}{q^{m+s} - q^{m+s-1}} \sum_{ c\in D_{m+s}} \wH(c)  \\
&=& \frac{q^{-(m+s-1)}}{q-1} \left[ \left(q^m-1 \right) q^{\ell + m -2} + \left(q^{s+1}-q \right) q^{\ell + m -2}    \right. \\
& & \qquad \qquad \qquad + \left. \left( q^{m+s} - q^m - q^{s+1} + q \right) \left( q^{\ell + m -2}  +  q^{\ell + m -3}\right) \right] \\
&=& q^{\ell -1} \frac{(q^{m}-1)}{q-1} +  q^{\ell +m - s -2} \frac{(q^{s}-1)}{q-1} .
\end{eqnarray*}
Thus, in view of Theorem \ref{HigherWts}, we obtain the desired upper bound for $\hat{d}_{m+s} $.
\end{proof}

It appears interesting to know whether the  higher weights subsequent to $\hat{d}_{m}$
meet the Griesmer-Wei bound. We will show
in Theorem \ref{furtherdr} below
that this is not the case and, in fact, the exact value of $\hat{d}_{m+1}$ is given by the upper bound in the above lemma.  Some spadework is, however, needed.
First, we make an elementary, but useful observation about sums of rank $1$ matrices.

\begin{lemma}
\label{rank1sums}
Let $\F$ be a field and let $\mathbf{u}, \mathbf{a}, \mathbf{x} \in \F^{\ell}$ and
$\mathbf{v}, \mathbf{b}, \mathbf{y}\in \F^m$ be nonzero vectors such that $\mathbf{u}^T\mathbf{v} + \mathbf{a}^T\mathbf{b} = \mathbf{x}^T\mathbf{y}$. 
Denote by $\langle\mathbf{u}, \mathbf{a}, \mathbf{x}\rangle$ the subspace of $\F^{\ell}$ spanned by
$\mathbf{u}, \mathbf{a}, \mathbf{x}$, 
and by
$\langle\mathbf{v}, \mathbf{b}, \mathbf{y}\rangle$ the subspace of $\F^{m}$ spanned by
$\mathbf{v}, \mathbf{b}, \mathbf{y}$. Then $\langle\mathbf{u}, \mathbf{a}, \mathbf{x}\rangle$
is one-dimensional or
$\langle\mathbf{v}, \mathbf{b}, \mathbf{y}\rangle$
 is one-dimensional.
\end{lemma}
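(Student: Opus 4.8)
The plan is to analyze the equation $\mathbf{u}^T\mathbf{v} + \mathbf{a}^T\mathbf{b} = \mathbf{x}^T\mathbf{y}$ by thinking of both sides as linear maps from $\F^m$ to $\F^\ell$ (sending a row vector $\mathbf{z}\in\F^m$ to $(\mathbf{u}^T\mathbf{v}+\mathbf{a}^T\mathbf{b})\mathbf{z}^T$, etc.). The right-hand side $\mathbf{x}^T\mathbf{y}$ has rank exactly $1$, so its image is the line $\langle\mathbf{x}\rangle\subseteq\F^\ell$ and its kernel is the hyperplane $\{\mathbf{z}:\mathbf{y}\mathbf{z}^T=0\}\subseteq\F^m$; dually the transpose has image $\langle\mathbf{y}\rangle$ in $\F^m$. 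The idea is to play the column span against the row span: the column span of $\mathbf{u}^T\mathbf{v}+\mathbf{a}^T\mathbf{b}$ is contained in $\langle\mathbf{u},\mathbf{a}\rangle$ and equals $\langle\mathbf{x}\rangle$, and symmetrically its row span is contained in $\langle\mathbf{v},\mathbf{b}\rangle$ and equals $\langle\mathbf{y}\rangle$.

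First I would dispose of degenerate cases. If $\mathbf{u}$ and $\mathbf{a}$ are linearly dependent, then $\mathbf{a}=\mu\mathbf{u}$ for some scalar $\mu$, and the left side becomes $\mathbf{u}^T(\mathbf{v}+\mu\mathbf{b})$; comparing with $\mathbf{x}^T\mathbf{y}$ and using the uniqueness clause of Proposition \ref{Rank1} (after checking $\mathbf{v}+\mu\mathbf{b}\neq 0$, which holds since $\mathbf{x}^T\mathbf{y}\neq 0$) gives $\mathbf{x}\in\langle\mathbf{u}\rangle$, so $\langle\mathbf{u},\mathbf{a},\mathbf{x}\rangle=\langle\mathbf{u}\rangle$ is one-dimensional and we are done. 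Symmetrically, if $\mathbf{v},\mathbf{b}$ are dependent, then $\langle\mathbf{v},\mathbf{b},\mathbf{y}\rangle$ is one-dimensional. So assume henceforth that $\mathbf{u},\mathbf{a}$ are linearly independent and $\mathbf{v},\mathbf{b}$ are linearly independent; I claim this is impossible.

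Under that assumption, write $M=\mathbf{u}^T\mathbf{v}+\mathbf{a}^T\mathbf{b}$. Its column span is exactly the $2$-dimensional space $\langle\mathbf{u},\mathbf{a}\rangle$: indeed, pick $\mathbf{z}$ with $\mathbf{v}\mathbf{z}^T=1,\ \mathbf{b}\mathbf{z}^T=0$ and another with $\mathbf{v}\mathbf{z}^T=0,\ \mathbf{b}\mathbf{z}^T=1$ (possible precisely because $\mathbf{v},\mathbf{b}$ are independent), which realize $\mathbf{u}^T$ and $\mathbf{a}^T$ as columns of $M$. Hence $\rk(M)=2$. But $M=\mathbf{x}^T\mathbf{y}$ has rank $1$ (it is nonzero by hypothesis, or else $\mathbf{u}^T\mathbf{v}=-\mathbf{a}^T\mathbf{b}$ would again force dependence via Proposition \ref{Rank1}), a contradiction. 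This rules out the assumed case and completes the proof.

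The main obstacle is purely bookkeeping: making sure the vectors one divides by or inverts are genuinely nonzero — in particular that $\mathbf{x}^T\mathbf{y}\neq 0$ is used to guarantee $M\neq 0$ and to license the application of the uniqueness statement in Proposition \ref{Rank1} in the degenerate branches. Everything else is the standard fact that a sum of two rank-one matrices with independent "left" factors and independent "right" factors has rank exactly two.
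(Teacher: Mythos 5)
Your proof is correct. It has the same skeleton as the paper's: reduce to showing that $\{\mathbf{u},\mathbf{a}\}$ and $\{\mathbf{v},\mathbf{b}\}$ cannot both be linearly independent, and then, in the dependent case, invoke the uniqueness clause of Proposition \ref{Rank1} to conclude that the relevant span is a line. The only real difference is how you detect that $M=\mathbf{u}^T\mathbf{v}+\mathbf{a}^T\mathbf{b}$ has rank $\ge 2$ when both pairs are independent: the paper computes an explicit $2\times 2$ minor of $M$ and factors it as $\Delta=\Delta_1\Delta_2$ with $\Delta_1=u_ia_j-u_ja_i$ and $\Delta_2=v_rb_s-v_sb_r$, then uses the vanishing of all $2\times2$ minors of $\mathbf{x}^T\mathbf{y}$ to force $\Delta_2=0$ for all $r,s$; you instead produce dual vectors $\mathbf{z}$ with $\mathbf{v}\mathbf{z}^T=1,\ \mathbf{b}\mathbf{z}^T=0$ (and vice versa) to exhibit $\mathbf{u}^T$ and $\mathbf{a}^T$ in the image of $M$, contradicting $\rk(\mathbf{x}^T\mathbf{y})=1$. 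Your coordinate-free image argument is slightly cleaner and avoids the determinant computation; the paper's minor factorization is more explicit and fits its determinantal theme. You are also more careful than the paper in spelling out the reduction (checking $\mathbf{v}+\mu\mathbf{b}\ne 0$ before applying the uniqueness in Proposition \ref{Rank1}), which the paper leaves implicit. No gaps.
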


\begin{proof}
In view of the uniqueness up to multiplication by nonzero scalars of vectors in an outer product (cf. the last assertion in Proposition \ref{Rank1}), it suffices to show that
at least one among $\{\mathbf{u}, \mathbf{a}\}$ and $\{\mathbf{v}, \mathbf{b}\}$ is linearly dependent.
 Suppose  $\{\mathbf{u}, \mathbf{a}\}$ is linearly independent. Then $u_ia_j - u_j a_i\ne 0$ for some $i,j\in \{1, \dots, \ell\}$ with $i\ne j$. Here, as usual,   $u_1, \dots , u_{\ell}$ denote the coordinates of
$\mathbf{u}$  (and likewise for the other vectors). Now choose any $r,s\in\{1, \dots ,m\}$ and consider the
$2\times 2$ minor $\Delta$ of $\mathbf{u}^T\mathbf{v} + \mathbf{a}^T\mathbf{b}$ corresponding to rows indexed by $i,j$ and columns indexed by $r,s$. 
An elementary calculation shows that 
$$
\Delta = \Delta_1 \Delta_2 \quad \text{where} \quad \Delta_1 :=
\left| \begin{array}{ll} u_i & a_i \\ u_j & a_j \end{array} \right| \text{ and }
\Delta_2 : = \left| \begin{array}{ll} v_r & v_s \\ b_r & b_s \end{array} \right|.
$$
Now $\Delta=0$, being the $2\times 2$ minor of the rank $1$ matrix $\mathbf{x}^T\mathbf{y}$ and $\Delta_1\ne 0$. 
So $\Delta_2=0$. Since $r,s$ 
were arbitrary, we see that
$\{\mathbf{v}, \mathbf{b}\}$ is linearly dependent.
\end{proof}

\begin{corollary}
\label{cor:maxrank1}
Let $\F$ be a field and let $\EuE$ be a subspace of $\Matlm(\F)$
such that $\rk(M)=1$ for all nonzero $M\in \EuE$. Then
\begin{equation}
\label{type1}
\EuE = \left\{ \mathbf{u}^T\mathbf{v} :\mathbf{v} \in V\right\} \quad \text{for some $ \mathbf{u}\in \F^{\ell}$ and a subspace $V$ of $\F^m$}
\end{equation}
or
\begin{equation}
\label{type2}
\EuE = \left\{ \mathbf{u}^T\mathbf{v} :\mathbf{u} \in U\right\} \quad \text{for some $ \mathbf{v}\in \F^{m}$ and a subspace $U$ of $ \F^{\ell}$}.
\end{equation}
In particular, $\dim \EuE \le \max\{\ell, m\} = m$.
\end{corollary}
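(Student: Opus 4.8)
The plan is to establish the dichotomy between \eqref{type1} and \eqref{type2}; the bound $\dim\EuE\le\max\{\ell,m\}=m$ will then be immediate, since in case \eqref{type1} the map $\mathbf{v}\mapsto\mathbf{u}^T\mathbf{v}$ is linear and injective (because $\mathbf{u}\ne 0$) and identifies $\EuE$ with the subspace $V$ of $\F^m$, while in case \eqref{type2} the analogous map identifies $\EuE$ with a subspace of $\F^{\ell}$ and $\ell\le m$. So I would assume $\EuE$ is \emph{not} of the form \eqref{type1} and aim to show it is of the form \eqref{type2}. First I would note that the left factors of the nonzero elements of $\EuE$ cannot all be scalar multiples of a single vector: if they were all multiples of some $\mathbf{u}$, then (writing each nonzero element as an outer product via Proposition \ref{Rank1}) setting $V=\{\mathbf{v}\in\F^m:\mathbf{u}^T\mathbf{v}\in\EuE\}$, which is a subspace by linearity, would exhibit $\EuE$ in the form \eqref{type1}. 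Hence there are $M_1=\mathbf{u}_1^T\mathbf{v}_1$ and $M_2=\mathbf{u}_2^T\mathbf{v}_2$ in $\EuE$ with $\{\mathbf{u}_1,\mathbf{u}_2\}$ linearly independent.

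Next I would use Lemma \ref{rank1sums} to normalise the situation. Since $\{\mathbf{u}_1,\mathbf{u}_2\}$ is independent we have $M_1+M_2\ne 0$ (otherwise $M_2=-M_1$, which forces $\mathbf{u}_2\parallel\mathbf{u}_1$ by the uniqueness part of Proposition \ref{Rank1}); thus $M_1+M_2\in\EuE$ has rank $1$, say $M_1+M_2=\mathbf{x}^T\mathbf{y}$. As $\langle\mathbf{u}_1,\mathbf{u}_2,\mathbf{x}\rangle$ is not one-dimensional, Lemma \ref{rank1sums} forces $\langle\mathbf{v}_1,\mathbf{v}_2,\mathbf{y}\rangle$ to be one-dimensional, so $\mathbf{v}_1\parallel\mathbf{v}_2$; rescaling the outer-product representations I may assume $\mathbf{v}_1=\mathbf{v}_2=:\mathbf{v}$, so that $M_1=\mathbf{u}_1^T\mathbf{v}$ and $M_2=\mathbf{u}_2^T\mathbf{v}$ with $\{\mathbf{u}_1,\mathbf{u}_2\}$ independent. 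The central claim is then: every nonzero $M=\mathbf{u}^T\mathbf{w}\in\EuE$ has $\mathbf{w}\parallel\mathbf{v}$. Granting this, $M=(\lambda\mathbf{u})^T\mathbf{v}$ for some nonzero scalar $\lambda$, hence $\EuE\subseteq\{\mathbf{u}^T\mathbf{v}:\mathbf{u}\in\F^{\ell}\}$, and therefore $\EuE=\{\mathbf{u}^T\mathbf{v}:\mathbf{u}\in U\}$ where $U=\{\mathbf{u}\in\F^{\ell}:\mathbf{u}^T\mathbf{v}\in\EuE\}$ is a subspace, which is precisely \eqref{type2}.

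To prove the claim I would argue by contradiction, and this is the step I expect to be the main obstacle: I must rule out a ``mixed'' configuration in which some elements of $\EuE$ share their left factor with $M_1$ while others share their right factor $\mathbf{v}$, and every invocation of Lemma \ref{rank1sums} must be preceded by a non-vanishing check. Suppose some nonzero $M=\mathbf{u}^T\mathbf{w}\in\EuE$ has $\mathbf{w}\not\parallel\mathbf{v}$. Using the uniqueness in Proposition \ref{Rank1} one checks $M_1+M\ne 0$ (otherwise $\mathbf{u}_1^T\mathbf{v}=\mathbf{u}^T(-\mathbf{w})$ would yield $\mathbf{w}\parallel\mathbf{v}$), so $M_1+M=\mathbf{u}_1^T\mathbf{v}+\mathbf{u}^T\mathbf{w}\in\EuE$ has rank $1$; by Lemma \ref{rank1sums} one of $\{\mathbf{u}_1,\mathbf{u}\}$, $\{\mathbf{v},\mathbf{w}\}$ is linearly dependent, and the second alternative is excluded by our assumption, so $\{\mathbf{u}_1,\mathbf{u}\}$ is dependent and, rewriting the representation of $M$, I may take $M=\mathbf{u}_1^T\mathbf{w}$ with $\mathbf{w}\not\parallel\mathbf{v}$. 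Now $M_2+M=\mathbf{u}_2^T\mathbf{v}+\mathbf{u}_1^T\mathbf{w}\ne 0$ as well (otherwise $\mathbf{u}_2^T\mathbf{v}=\mathbf{u}_1^T(-\mathbf{w})$ would force $\mathbf{u}_1\parallel\mathbf{u}_2$), so $M_2+M$ has rank $1$; but $\{\mathbf{u}_2,\mathbf{u}_1\}$ is independent, so Lemma \ref{rank1sums} now forces $\{\mathbf{v},\mathbf{w}\}$ to be dependent, i.e. $\mathbf{w}\parallel\mathbf{v}$, contradicting our assumption. This would prove the claim, and with it the corollary.
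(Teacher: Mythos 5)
Your proof is correct and follows essentially the same route as the paper's: both arguments reduce everything to Lemma \ref{rank1sums}, showing that once two elements of $\EuE$ have linearly independent left factors, all right factors must be proportional, so that \eqref{type2} holds whenever \eqref{type1} fails. You organize the case analysis slightly differently (a two-step application of the lemma starting from the negation of \eqref{type1}, rather than the paper's trichotomy around a single fixed rank-one element), but the mathematical content is the same.
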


\begin{proof}
If  $\dim \EuE =0$, then there is nothing to prove. Assume that there is $M\in  \EuE$ with $\rk(M)=1$.
Fix some nonzero $\mathbf{u} \in \F^{\ell}$ and $\mathbf{v}\in \F^m$ such that $M= \mathbf{u}^T\mathbf{v}$.
In case every element of $\EuE$ is of the form $\mathbf{u}^T\mathbf{b}$
for some $\mathbf{b}\in \F^m$, then $\EuE$ has a basis of the form $\left\{\mathbf{u}^T\mathbf{v}^{(1)}, \dots , \mathbf{u}^T\mathbf{v}^{(s)}\right\}$. In this case, $\left\{\mathbf{v}^{(1)},\dots , \mathbf{v}^{(s)}\right\}$ must be a linearly independent subset of $\F^m$, and hence \eqref{type1} holds with $\dim \EuE = s \le m$. Likewise, if every element of $\EuE$ is of the form $\mathbf{a}^T\mathbf{v}$, then \eqref{type2} holds with $\dim \EuE \le \ell$. The only remaining case is when $\EuE$ contains an element $N$ of the form $\mathbf{a}^T\mathbf{b}$ for some
$\mathbf{a}\in \F^{\ell}$ and $\mathbf{b}\in \F^m$ such that both $\{\mathbf{u}, \mathbf{a}\}$  and $\{\mathbf{v} ,\mathbf{b}\}$ are linearly independent. But then $M+N$ is nonzero and 
by Lemma~\ref{rank1sums},
$\rk(M+N) \ne 1$, which  contradicts the hypothesis on $\EuE$.
\end{proof}

\begin{lemma}
\label{maxrank1s}
Let  $\EuD$ be an $r$-dimensional subspace of $\, \Matlm(\Fq)$  with $r >m$. 
Then $\EuD$ contains at most $q^{r-1}+q^2 - q - 1$ matrices of rank $1$. Consequently, $\EuD$
has
at least  $\left(q^{r-1} - q \right)(q-1)$ matrices of rank $\ge 2$.
\end{lemma}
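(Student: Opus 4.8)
The plan is to group the rank-one matrices of $\EuD$ by their column space and to exploit the fact, forced by the hypothesis $r>m$, that no subspace of $\EuD$ consisting solely of rank-$\le 1$ matrices can be all of $\EuD$. Concretely, for each line $L=\langle\mathbf{u}^{T}\rangle$ in $\Fq^{\ell}$ I would set $\EuD_{L}=\{M\in\EuD:\text{the column space of }M\text{ lies in }L\}$; by Proposition~\ref{Rank1} this is a subspace of $\EuD$ of the form $\{\mathbf{u}^{T}\mathbf{v}:\mathbf{v}\in V_{L}\}$ for some $V_{L}\subseteq\Fq^{m}$, so all its nonzero elements have rank $1$ and, by Corollary~\ref{cor:maxrank1} (or simply because $\mathbf{v}\mapsto\mathbf{u}^{T}\mathbf{v}$ is injective), $\dim\EuD_{L}\le m<r$. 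Since distinct lines meet only in $0$, one has $\EuD_{L}\cap\EuD_{L'}=\{0\}$ for $L\ne L'$, and each rank-one $M\in\EuD$ lies in $\EuD_{L}$ for exactly one $L$, namely its own column space. Hence the set $R$ of rank-one matrices of $\EuD$ decomposes as $R=\bigsqcup_{L}\bigl(\EuD_{L}\setminus\{0\}\bigr)$ over the finitely many lines with $\EuD_{L}\ne\{0\}$, and it suffices to bound $|R|$, after which the number of matrices of rank $\ge 2$ is $q^{r}-1-|R|$.

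I would first dispose of two easy situations. If $R=\emptyset$ there is nothing to prove. If every $\EuD_{L}$ is at most one-dimensional, then $R$ is a disjoint union of punctured lines indexed by distinct column directions; the number of such lines is at most $\tfrac{q^{\ell}-1}{q-1}$, so $|R|\le q^{\ell}-1$, which is comfortably below $q^{r-1}+q^{2}-q-1$ because $\ell\le m\le r-1$.

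That leaves the main case, where some $\EuD_{L^{*}}$ has dimension $a$ with $2\le a\le m\le r-1$; write $L^{*}=\langle(\mathbf{u}^{*})^{T}\rangle$. Here I would partition $\EuD$ into the $q^{r-a}$ cosets of $\EuD_{L^{*}}$. The coset $\EuD_{L^{*}}$ itself contributes $q^{a}-1$ rank-one matrices, and the crux is that every other coset $W$ contributes at most $q$: if $W$ contains a rank-one matrix $M_{0}=\mathbf{a}^{T}\mathbf{b}$, then $\mathbf{a}$ cannot be a scalar multiple of $\mathbf{u}^{*}$ (otherwise $W=\EuD_{L^{*}}$), and for any other rank-one $M_{1}\in W$ the difference $M_{1}-M_{0}$ is a nonzero element $(\mathbf{u}^{*})^{T}\mathbf{v}$ of $\EuD_{L^{*}}$, so Lemma~\ref{rank1sums} applied to $\mathbf{a}^{T}\mathbf{b}+(\mathbf{u}^{*})^{T}\mathbf{v}=M_{1}$ forces $\{\mathbf{b},\mathbf{v}\}$ to be dependent (since $\{\mathbf{a},\mathbf{u}^{*}\}$ is not); hence all rank-one matrices of $W$ lie in the $q$-element coset $M_{0}+\langle(\mathbf{u}^{*})^{T}\mathbf{b}\rangle$. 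Summing then gives
$$
|R|\ \le\ (q^{a}-1)+q\bigl(q^{r-a}-1\bigr)\ =\ q^{a}+q^{r-a+1}-q-1,
$$
and since $a\ge 2$ and $r-a+1\ge 2$, the elementary inequality $(q^{a-2}-1)(q^{r-a+1}-q^{2})\ge 0$ rearranges to $q^{a}+q^{r-a+1}\le q^{r-1}+q^{2}$, so $|R|\le q^{r-1}+q^{2}-q-1$, as wanted.

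The "consequently" part is then immediate: the rank-$\le 1$ matrices in $\EuD$ are the $|R|$ rank-one ones together with the zero matrix, so at least $q^{r}-(q^{r-1}+q^{2}-q)=(q-1)(q^{r-1}-q)$ matrices have rank $\ge 2$. I expect the main obstacle to be the per-coset estimate $|R\cap W|\le q$, which is precisely where Lemma~\ref{rank1sums} and the hypothesis $r>m$ do their work — the latter guaranteeing $a<r$, so that $\EuD_{L^{*}}$ is a proper subspace and there genuinely are $q^{r-a}-1>0$ other cosets to control. The separate handling of the case where all $\EuD_{L}$ are lines is also essential, since there the coset estimate degrades to the trivial bound $q^{r}-1$.
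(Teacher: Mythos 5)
Your proof is correct, and its engine is the same as the paper's: partition $\EuD$ into cosets of a subspace consisting of rank-$\le 1$ matrices, use Lemma~\ref{rank1sums} to show that each coset other than the subspace itself contains at most $q$ rank-one matrices, and then maximize the resulting count $(q^{a}-1)+q(q^{r-a}-1)$ over the admissible dimensions $a\le m<r$, the maximum $B_2=q^{r-1}+q^2-q-1$ occurring at $a=2$. The set-up differs in two worthwhile ways. The paper takes $\EuE$ to be a rank-one subspace of $\EuD$ of \emph{maximal} dimension $s$, invokes Corollary~\ref{cor:maxrank1} to put $\EuE$ in the form \eqref{type1} or \eqref{type2}, and uses maximality to force the linear independence needed to apply Lemma~\ref{rank1sums}; the case $s=1$ is then treated separately with a sharper per-coset bound of $1$. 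You instead group the rank-one matrices by their column space, so your subspaces $\EuD_{L}$ are of type \eqref{type1} by construction and Corollary~\ref{cor:maxrank1} is not needed at all; the independence of $\{\mathbf{a},\mathbf{u}^{*}\}$ comes for free from $W\ne\EuD_{L^{*}}$ rather than from a maximality argument, and your degenerate case (all $\EuD_{L}$ of dimension $\le 1$) is disposed of by the cruder but sufficient count $|R|\le q^{\ell}-1$ over lines, using $\ell\le m<r$. What your route buys is economy of hypotheses (no structure theorem for maximal rank-one subspaces); what the paper's buys is a uniform coset framework that also records the slightly better bound $B_1^{*}=(q^{r-1}-1)+(q-1)$ in the $s=1$ case. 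The final optimization and the ``consequently'' computation are identical in both.
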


\begin{proof}
Let $\EuE$ be a subspace of $\EuD$ of maximum possible dimension with the property that $\rk(M)=1$ for all nonzero $M\in \EuE$. Let $s:=\dim \EuE$. If $s=0$, then $\EuD$ has no matrices of rank $1$ and the result holds trivially. Suppose $s\ge 1$. Now $s \le m < r$ and  \eqref{type1} or  \eqref{type2} holds, thanks to Corollary~\ref{cor:maxrank1}. Assume that  \eqref{type1} holds and fix $ \mathbf{u}$ and $V$ as in \eqref{type1}. If a coset of $\EuE$ in $\EuD$ other than $\EuE$ contains a matrix, say $M$, of rank $1$, then the coset is of the form $M+\EuE$.
Fix nonzero $\mathbf{a}\in \F^{\ell}$ and $\mathbf{b}\in \F^m$ such that $M=\mathbf{a}^T\mathbf{b}$.
Then $\{\mathbf{u}, \mathbf{a}\}$ must be linearly independent because otherwise every nonzero matrix in $\EuE+\Fq M$ is of rank $1$, contradicting the maximality of $s$. Further if $N = \mathbf{u}^T\mathbf{v} \in \EuE$ (where
$\mathbf{v} \in V$) is such that $M+N$ has rank $1$, then Lemma \ref{rank1sums} implies that $\mathbf{v} = \lambda \mathbf{b}$ for some $\lambda \in \Fq$. Since $\lambda$ can take $q$ values, it follows that
the coset $M+\EuE$ contains at most $q$ matrices of rank~$1$.
Likewise if \eqref{type2} holds and we fix $ \mathbf{v}$ and $U$ as in \eqref{type2}, then we see that
$\{\mathbf{v}, \mathbf{b}\}$ is linearly independent, and the coset $M+\EuE$ contains 
at most $q$ matrices of rank $1$. In the special case when $s=1$ (which is the only case when both \eqref{type1} and \eqref{type2} hold), we see from Lemma \ref{rank1sums} that $M$ is the only matrix of rank $1$ in the
coset $M+\EuE$. Thus if $s=1$, then there are at most $B^*_1:=(q^{r-1}-1)+ (q-1)$ matrices of rank $1$ in $\EuD$.
And if $s\ge 2$, then  there are at most $B_s:= q(q^{r - s}-1)+ (q^s -1)$ matrices of rank $1$ in $\EuD$. To complete the proof, 
observe that
$$
B_2 - B^*_1 = (q-1)^2 > 0 \ \text{ and } \ B_2 - B_s = \left(q^s - q^2)(q^{r-s-1} -1 \right) \ge 0 \text { for } 2\le s < r.
$$
So  $\EuD$  has at most $B_2 = q^{r-1}+q^2 - q - 1$ matrices of rank $1$. 
And there remain $(q^r -1) - B_2 = \left(q^{r-1} - q \right)(q-1)$ nonzero matrices whose rank must be $\ge 2$.
\end{proof}

\begin{theorem}
\label{furtherdr}
Assume that $\ell \ge 2$. For $1\le r \le \ell m$, let $\hat{d}_r$  denote the $r^{\rm th}$ higher weight of $\Cpone$. Then for $r=m+1, \dots , \ell m$, 
$$
\hat{d}_r \ge  q^{\ell +m   - r -1}\left( \frac{q^{r}-1}{q-1} + q^{r-2} -1 \right) = \hat{d}_m +  q^{\ell +m - r -1} \left( \frac{q^{r-m}-1}{q-1}  + q^{r-2} -1 \right),
$$
Moreover, equality holds when $r=m+1$ so that
$\hat{d}_{m+1} = \hat{d}_m +  q^{\ell  +m -3}$.
\end{theorem}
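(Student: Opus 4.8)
The plan is to prove the lower bound for every $r$ in the stated range by a counting estimate, and then to observe that at $r=m+1$ it already matches the upper bound supplied by Lemma~\ref{lowerupper}.

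\emph{The lower bound.} Fix $r$ with $m<r\le \ell m$ and let $D$ be an arbitrary $r$-dimensional subcode of $\Cpone$. Since the evaluation map is injective and linear, $D=\Ev(\EuD)$ for a unique $r$-dimensional subspace $\EuD\subseteq\Matlm(\Fq)$, namely the space of coefficient matrices of the linear forms whose images lie in $D$. As in the proof of Theorem~\ref{HigherWts}, $\|D\|=(q^r-q^{r-1})^{-1}\sum_{c\in D}\wH(c)$; and by Lemma~\ref{PartialTraceLem} together with Theorem~\ref{Wts}, writing $N_j:=|\{F\in\EuD:\rk(F)=j\}|$, one has $\sum_{c\in D}\wH(c)=\sum_{j=1}^{\ell}N_j\hat{w}_j$ with $\hat{w}_j=q^{\ell+m-j-1}(q^j-1)/(q-1)$. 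These numbers satisfy $N_1+\cdots+N_\ell=q^r-1$ and, by Lemma~\ref{maxrank1s} (applicable since $r>m$), $N_1\le B:=q^{r-1}+q^2-q-1$. Because $\hat{w}_1<\hat{w}_2<\cdots<\hat{w}_\ell$, I would note that under these constraints $\sum_j N_j\hat{w}_j$ is minimized by the greedy choice $N_1=B$, $N_2=q^r-1-B$, $N_3=\cdots=N_\ell=0$ (whenever $N_1<B$ or some $N_j>0$ with $j\ge3$, shifting one unit onto a strictly cheaper coordinate strictly decreases the sum, and here $q^r-1-B=(q-1)(q^{r-1}-q)\ge0$). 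Hence $\sum_j N_j\hat{w}_j\ge B\hat{w}_1+(q^r-1-B)\hat{w}_2$, and a direct simplification shows
$$
\frac{B\hat{w}_1+(q^r-1-B)\hat{w}_2}{q^r-q^{r-1}}=q^{\ell+m-r-1}\Big(\frac{q^r-1}{q-1}+q^{r-2}-1\Big).
$$
Since $D$ was arbitrary, this is a lower bound for $\hat{d}_r$; rewriting the same quantity relative to $\hat{d}_m$ (Theorem~\ref{HigherWts}) yields the second displayed form.

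\emph{Equality at $r=m+1$.} Substituting $r=m+1$ into the second form, the correction term becomes $q^{\ell-2}\big((q-1)/(q-1)+q^{m-1}-1\big)=q^{\ell-2}\cdot q^{m-1}=q^{\ell+m-3}$, so $\hat{d}_{m+1}\ge\hat{d}_m+q^{\ell+m-3}$. The reverse inequality is exactly the case $s=1$ of the upper bound in Lemma~\ref{lowerupper}. Therefore $\hat{d}_{m+1}=\hat{d}_m+q^{\ell+m-3}$.

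\emph{Where the work is.} The substantive input is Lemma~\ref{maxrank1s} — equivalently the structure theorem Corollary~\ref{cor:maxrank1} — which caps the number of rank-$1$ matrices in an $r$-dimensional space once $r>m$; without this one cannot improve on the ``all rank $1$'' estimate and the bound would collapse to the Griesmer--Wei bound. Granting that lemma, the remaining steps are the minimization of a linear functional over an interval constraint and routine algebra to reconcile the two closed forms, so I do not anticipate a real obstacle beyond careful bookkeeping. It is worth recording, as the theorem asserts, that the bound obtained genuinely exceeds the Griesmer--Wei bound: the surplus over the latter equals $q^{\ell+m-r-1}(q^{r-2}-1)$, which is positive because $r>m\ge\ell\ge2$ forces $r\ge3$.
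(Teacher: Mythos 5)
Your proposal is correct and follows essentially the same route as the paper: identify the subcode with an $r$-dimensional space of coefficient matrices, use the support-weight averaging formula, and invoke Lemma~\ref{maxrank1s} to cap the number of rank-$1$ matrices, with equality at $r=m+1$ supplied by Lemma~\ref{lowerupper}. Your explicit greedy minimization over the rank distribution $(N_1,\dots,N_\ell)$ is just a slightly more formal phrasing of the paper's direct estimate (which bounds every matrix of rank $\ge 2$ below by $\hat{w}_2$), and the algebra checks out.
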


\begin{proof} Fix  $r\in \{m+1, \dots , \ell m\}$ and let $D$ be any $r$-dimensional subcode of $\Cpone$.
Since the evaluation map from $\Fq[X]_1$ onto $\Cpone$ is injective and since polynomials
$f = \sum_{i=1}^\ell\sum_{j=1}^m f_{ij}X_{ij}$ in $\Fq[X]_1$ can be identified with their coefficient matrices $\left( f_{ij}\right)$, we see that $D$ can be identified with an $r$-dimensional subspace $\EuD$ of $\Matlm (\Fq)$. Moreover, the Hamming weight of a codeword in $D$ is completely determined by the rank of the corresponding matrix in $\EuD$, thanks to Lemma \ref{PartialTraceLem}. Thus using Theorem  \ref{Wts}, we see that
$$
 \| D \| = \frac{1}{q^r - q^{r-1}} \sum_{c\in D} \wH(c) \ge \frac{\rho \, q^{\ell + m -2} + (q^r - 1 - \rho) \left(q^{\ell + m -2} + q^{\ell + m -3}\right)}{q^r - q^{r-1}}  ,
$$
where $\rho$ denotes the number of matrices in $\EuD$ of rank $1$. Now we know from Lemma \ref{maxrank1s} that $(q^r - 1 - \rho) \ge \left(q^{r-1} - q \right)(q-1)$ and so
\begin{eqnarray*}
 \| D \| & \ge &   \frac{(q^r - 1) q^{\ell + m -2} +  \left(q^{r-1} - q \right)(q-1) q^{\ell + m -3}}{q^r - q^{r-1}} \\
& = & q^{\ell +m   - r -1}\left( \frac{q^{r}-1}{q-1} + q^{r-2} -1 \right).
\end{eqnarray*}
Since $D$ was an arbitrary $r$-dimensional subcode of $\Cpone$, we obtain the desired inequality for $\hat{d}_r$. The alternative expression in terms of $\hat{d}_m$ is an easy consequence of Theorem \ref{HigherWts}. Finally, if $r=m+1$, then the inequality specializes to $\hat{d}_{m+1} \ge \hat{d}_m +  q^{\ell  +m -3}$. 
Thus Lemma \ref{lowerupper} yields $\hat{d}_{m+1} = \hat{d}_m +  q^{\ell  +m -3}$.
\end{proof}

It may be interesting to determine the exact value of \emph{all} the higher weights of not only $\Cpone$ but also $\Cpt$ for $1\le t\le \ell$.

\section*{Acknowledgments}

The first and the second named authors are grateful to the Indian Institute of Technology Bombay and the 
Technical University of Denmark respectively for the warm hospitality and support of short visits to these institutions where some of this work was done. 
We are also grateful to Fernando Pi\~nero for bringing  \cite{C} and \cite{D} to our attention.

\end{document}